\newcommand{\idiot}[1]{\vspace{5 mm}\par \noindent
\marginpar{\textsc{For longer version}}
\framebox{\begin{minipage}[c]{.99 \textwidth}
#1 \end{minipage}}\vspace{5 mm}\par}
\newcommand{\rant}[1]{\vspace{5 mm}\par \noindent
\marginpar{\textsc{Comments}}
\framebox{\begin{minipage}[c]{.99 \textwidth}
\tt #1 \end{minipage}}\vspace{5 mm}\par}
\renewcommand{\idiot}[1]{}
\renewcommand{\rant}[1]{}
\def\unprotectedboldentry#1{\textcolor{Red}{\textbf{#1}}}
\def\boldentry{\protect\unprotectedboldentry}
\newcommand{\tikztableau}[2][scale=0.6,every node/.style={font=\small}]{
    \def\newtableau{#2}
    \begin{array}{c}
    \begin{tikzpicture}[#1]
    \coordinate (x) at (-0.5,0.5);
    \coordinate (y) at (-0.5,0.5);
    \foreach \row in \newtableau {
        \coordinate (x) at ($(x)-(0,1)$);
        \coordinate (y) at (x);
        \foreach \entry in \row {
            \ifthenelse{\equal{\entry}{X}}
               {
                \node (y) at ($(y) + (1,0)$) {};
                \fill[color=gray!10] ($(y)-(0.5,0.5)$) rectangle +(1,1);
                \draw[color=gray] ($(y)-(0.5,0.5)$) rectangle +(1,1);
               }
               {
                \ifthenelse{\equal{\entry}{\boldentry X}}
                   {
                    \node (y) at ($(y) + (1,0)$) {};
                    \fill[color=gray] ($(y)-(0.5,0.5)$) rectangle +(1,1);
                    \draw ($(y)-(0.5,0.5)$) rectangle +(1,1);
                   }
                   {
                    \node (y) at ($(y) + (1,0)$) {\entry};
                    \draw ($(y)-(0.5,0.5)$) rectangle +(1,1);
                   }
               }
            }
        }
    \end{tikzpicture}
    \end{array}}
\newcommand{\tikztableausmall}[1]{\tikztableau[scale=0.45,every node/.style={font=\rm\small}]{#1}}
\def\sym{\operatorname{\mathsf{Sym}}}
\def\Qsym{\operatorname{\mathsf{QSym}}}
\def \fS{{\mathfrak S}}
\def \HH{{H}}
\def\Nsym{\operatorname{\mathsf{NSym}}}
\newtheorem{Theorem}{Theorem}[section]
\newtheorem{Corollary}[Theorem]{Corollary}
\newtheorem{Lemma}[Theorem]{Lemma}
\newtheorem{Example}[Theorem]{Example}
\theoremstyle{definition}
\newtheorem{Definition}[Theorem]{Definition}
\begin{document}
\title{Structure Constants for Immaculate Functions}
\author[Shu Xiao Li]{Shu Xiao Li}
\address[Shu Xiao Li]{Department of Mathematics and Statistics\\ York  University\\       To\-ron\-to, Ontario M3J 1P3\\ CANADA} \email{lishu3@yorku.ca}
\date{\today}
 
\begin{abstract}
The immaculate functions, $\fS_{\alpha}$, were introduced as a Schur-like basis for $\Nsym$. We investigate facts about their structure constants. These are analogues of Littlewood-Richardson coefficents. We will give a new proof of the left Pieri rule for
the $\fS_{\alpha}$, a translation invariance property for the structure coefficients of the
$\fS_{\alpha}$, and a counterexample to an $\fS_{\alpha}$-analogue of the saturation conjecture.
\end{abstract}

\maketitle
\setcounter{tocdepth}{3}

\section{Introduction}

\noindent
Symmetric functions, along with their foundamental Schur basis, are one of the most important objects in algebraic combinatorics. They appear in numerous areas throughout mathematics including representation theory \cite{Sagan}, algebraic geometry \cite{Manivel} and much more. There have been intensive studies on symmetric functions and many generalizations have been developed such as non-commutative symmetric functions ($\Nsym$).\\
\\
The notion of $\Nsym$, as a Hopf algebra analogue of the symmetric functions, was first introduced by I.M. Gelfand, D. Krob, et al. in 1995 \cite{GKLLRT}. Then, it showed great importance in algebraic combinatorics, together with its dual, $\Qsym$. Symmetric functions, $\Nsym$ and $\Qsym$ are known to be universal objects in a certain category of Hopf algebra \cite{ABS}. In \cite{BBSSZ2}, the authors constructed a Schur-like basis for $\Nsym$, called immaculate functions, $\{\fS_{\alpha}\}$ indexed by compositions. They are defined in a similar way to Schur functions, they have a lot of properties analogous to the Schur basis and their images under the forgetful projection are Schur functions.\\
\\
In this paper, we will focus on the immaculate version of Littlewood-Richardson coefficients, that is the multiplicative structure constants of immaculate functions. In $\sym$, these coefficients are obtained from the expansion of the product of two Schur functions in Schur basis. They are also linked with other branches of mathematics such as the decomposition of tensor product of Schur modules and the saturation conjecture (\cite{KT}). They are all positive and LR rule (Theorem \ref{thm:2.1}) gives them a combinatorial interpretation.\\
\\
For immaculate functions, \cite{BBSSZ2} gave a right Pieri rule (\ref{thm:2.3}) that is positive and multiplicity free. However, the structure constants could be negative in general. For instance, in \cite{BSZ}, the authors gave a explicit formula for left Pieri rule, which could be negative, but still multiplicity free. The goal of this paper is to explore facts about these constants. In section {\ref{sec:3}}, we give a example that the immaculate analogue of saturation conjecture fails. In section \ref{sec:4}, we show that the structure constants satisfy a translation invariant property, which does not come from the commutative case. Finally in section \ref{sec:5}, we give a combinatorial interpretation of the left Pieri rule in \cite{BSZ}, which also shows that it is multiplicity free.

\section{Definitions}

\subsection{Partitions and Compositions}

A partition of a non-negative integer $n$ is an integer vector $\lambda=(\lambda_1,\lambda_2,\dots,\lambda_k)$ such that $\lambda_1\geq\lambda_2\geq\dots\geq\lambda_k>0$ and $\lambda_1+\lambda_2+\dots+\lambda_k=n$. We indicate that $\lambda$ is a partition of $n$ with the notation $\lambda\vdash n$. We denote the length $\ell(\lambda)=m$ and size $|\lambda|=n$.\\
\\
A composition of a natural number $n$ is an integer vector $\alpha=(\alpha_1,\alpha_2,\dots,\alpha_m)$ such that $\alpha_i>0$ for all $i$ and $\alpha_1+\alpha_2+\dots+\alpha_m=n$. We indicate that $\alpha$ is a composition of $n$ with the notation $\alpha\models n$. We denote the length $\ell(\alpha)=m$ and size $|\alpha|=n$.\\
\\
For partitions $\mu,\nu$, and natural number $i$, we say $\mu\prec_i\nu$ if
\begin{enumerate}
\item $|\nu|=|\mu|+i$,
\item $\mu_j\leq\nu_j$ for all $1\leq j\leq \ell(\mu)$,
\item If $\nu_i>\mu_i$ and $\nu_j>\mu_j$ for some $i, j$, then either $\nu_i\leq\mu_j$ or $\nu_j\leq\mu_i$.
\end{enumerate}
By convention, we let $\mu_j=0$ for $j>\ell(\mu)$.\\
\\
For compositions $\alpha,\beta$, and natural number $i$, we say $\alpha\subset_i\beta$ if
\begin{enumerate}
\item $|\beta|=|\alpha|+i$,
\item $\alpha_j\leq\beta_j$ for all $1\leq j\leq \ell(\alpha)$,
\item $\ell(\beta)\leq\ell(\alpha)+1$.
\end{enumerate}

\subsection{$\sym$ and $\Nsym$}

The ring of symmetric function, $\sym$ is defined as the commutative $\mathbb{Q}$-algebra generated by $\{h_1, h_2, \dots\}$ with no other relation. It is graded, namely, $h_i$ has degree $i$ and the grading is extended multiplicatively. The element $h_i$ is said to be the \textit{complete homogeneous function} of degree $i$. The sub vector space of elements of degree $n$ of $\sym$ is denoted by $\sym_n$. For a partition $\lambda=(\lambda_1,\lambda_2,\dots,\lambda_k)$, we define $h_{\lambda}=h_{\lambda_1}h_{\lambda_2}\cdots h_{\lambda_k}$. Then, $\{h_\lambda:\lambda\vdash n\}$ is a basis for $\sym_n$.\\
\\
The ring of non-commutative symmetric functions, $\Nsym$, is the $\mathbb{Q}$-algebra generated by $\{H_1,H_2,\dots\}$ with no relation. It is graded, namely, $H_i$ has degree $i$ and the grading is extended multiplicatively. The sub vector space of elements of degree $n$ of $\Nsym$ is denoted by $\Nsym_n$. For a composition $\alpha=(\alpha_1,\alpha_2,\dots,\alpha_m)$, we define $H_{\alpha}=H_{\alpha_1}H_{\alpha_2}\cdots H_{\alpha_m}$. Then, $\{H_{\alpha}:\alpha\models n\}$ is a basis for $\Nsym_n$. There is a \textit{forgetful} projection $\chi:\Nsym\to\sym$ by $\chi(H_{\alpha})=h_{\text{sort}(\alpha)}$ where sort$(\alpha)$ is the partition obtained by reordering $\alpha$.\\
\\

\subsection{The Schur functions and immaculate functions}

(For more information on Schur functions, we refer the readers to \cite{Sagan} or \cite{MacDonald})\\
\\
For a partition $\lambda=(\lambda_1,\lambda_2,\dots,\lambda_k)$, the Schur function $s_{\lambda}$ is defined as
$$s_{\lambda}=\det\begin{bmatrix}h_{\lambda_1}&h_{\lambda_1+1}&\cdots&h_{\lambda_1+k-1}\\h_{\lambda_2}-1&h_{\lambda_2}&\cdots&h_{\lambda_2+k-2}\\ \vdots&\vdots&\ddots&\vdots\\h_{\lambda_k-k+1}&h_{\lambda_k-k+2}&\cdots&h_{\lambda_k}\end{bmatrix}=\sum_{\sigma\in S_k}(-1)^{\sigma}h_{\lambda_1+\sigma_1-1,\dots,\lambda_k+\sigma_k-k}$$
where $h_0=1$ and $h_m=0$ for $m<0$.\\
\\
Then, the set $\{s_{\lambda}:\lambda\vdash n\}$ forms a basis for $\sym_n$.\\
\\
In \cite{BBSSZ}, the authors introduced a new basis for $\Nsym$ that is analogous to the Schur functions for $\sym$, called the immaculate functions, denoted by $\{\fS_{\alpha}\}$ indexed by compositions.\\
\\
For a composition $\alpha=(\alpha_1,\alpha_2,\dots,\alpha_k)$, 
\begin{equation}\label{eq:1}
\fS_{\alpha}=\sum_{\sigma\in S_k}(-1)^{\sigma}H_{\alpha_1+\sigma_1-1,\dots,\alpha_k+\sigma_k-k}
\end{equation}
where $H_0=1$ and $H_m=0$ for $m<0$.\\
\\
The set $\{\fS_{\alpha}:\alpha\models n\}$ forms a basis for $\Nsym_n$.\\
\\

\subsection{Tableaux}

A tableau is a finite collection of cells, arranged in left-justified rows and filled with positive integers e.g. 
$$ T= \tikztableausmall{{1,1,2},{2},{2,3}} $$
Let $T$ be a tableau. The shape of $T$, denoted by $sh(T)$, is the integer vector whose $i$-th entry is the length of row $i$, reading from top to bottom. The content of $T$, denoted by $c(T)$, is the integer vector whose $j$-th entry is the number of times $j$ appears in $T$. The reading word of $T$, denoted by $read(T)$, is the word of entries read starting in the top row from right to left, then proceeding down the rows. In the example above, $sh(T)=(3,1,2)$, $c(T)=(2,3,1)$, and $read(T)=211232$.\\
\\
For compositions $\alpha,\beta$ with $\alpha_i\geq\beta_i$ for all $i$, a skew tableau of shape $\alpha/\beta$ is a tableau of shape $\alpha$ with cells of $\beta$ removed from the upper left corner e.g.
$$ T'= \tikztableausmall{{\boldentry X, 1,1,2},{\boldentry X,\boldentry X,2},{2,3}} $$
has skew shape $(4,3,2)/(1,2)$.\\
\\
In this case, $\beta$ is called the \textit{inner shape} of $T'$ and $\alpha-\beta$ is called the \textit{outer shape} of $T'$.\\
\\
A tableau is called \textit{semistandard} if its rows are weakly increasing from left to right and its columns are strictly increasing from top to bottom. A tableau is said to be $immaculate$ if its rows are weakly increasing from left to right and its first column is strictly increasing from top to bottom. A tableau $T$ is called \textit{Yamanouchi} if in $read(T)$, for every positive integer $j$ and every prefix $d$ of $read(T)$, there are at least as many occurrences of $j$ as there are of $j+1$ in $d$.\\
\\
All the definitions above for tableaux apply verbatim to skew tableaux.

\subsection{The Structure Constants}

As mentioned before, Schur functions and immaculate functions form bases for $\sym$ and $\Nsym$ respectively. Then, the product of two functions can be expanded as a linear combination in their basis. More precisely, for partitions $\mu,\nu$ and compositions $\alpha,\beta$,
\[ s_{\mu}s_{\nu}=\sum_{\lambda}c_{\mu\nu}^{\lambda}s_{\lambda}, \]
\[ \fS_{\alpha}\fS_{\beta}=\sum_{\gamma}C_{\alpha\beta}^{\gamma}\fS_{\gamma}. \]
$c_{\mu\nu}^{\lambda}$ and $C_{\alpha\beta}^{\gamma}$ are called the structure constants for Schur functions and immaculate functions respectively.\\
\\
Here, we give two classical results regarding the structure constants for Schur functions. For detailed proof, we refer the readers to \cite{Sagan} and \cite{MacDonald}. 

\begin{Theorem}\label{thm:2.1}
(Littlewood-Richardson Rule) For partitions $\mu$ and $\nu$, 
$$s_{\mu}s_{\nu}=\sum_{\lambda\vdash|\mu|+|\nu|}c_{\mu\nu}^{\lambda}s_{\lambda}$$
where $c_{\mu\nu}^{\lambda}$ is the number of skew semistandard Yamanouchi tableaux of shape $\lambda/\mu$ and content $\nu$.
\end{Theorem}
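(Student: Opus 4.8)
The plan is to bridge the determinantal definition of $s_\lambda$ with the combinatorial model counting semistandard tableaux, and then to extract the product rule by an insertion argument. First I would prove the identity
\[ s_\lambda=\sum_{T}x^{c(T)}, \]
where $T$ ranges over semistandard tableaux of shape $\lambda$ and $x^{c(T)}=\prod_i x_i^{c(T)_i}$. To get this from the Jacobi--Trudi determinant displayed above, I would interpret each $h_m$ as the generating function of a single weakly increasing row and read the signed sum over $S_k$ as a signed count of $k$-tuples of lattice paths; the Lindstr\"om--Gessel--Viennot lemma cancels every family of paths that intersects, and the surviving non-intersecting families are in weight-preserving bijection with the semistandard fillings of $\lambda$. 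The same computation with the inner shape $\mu$ deleted gives the skew analogue $s_{\lambda/\mu}=\sum_T x^{c(T)}$ over skew semistandard tableaux of shape $\lambda/\mu$.

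Next I would reduce the theorem to a statement about a single skew shape. Under the standard Hall inner product, for which $\{s_\lambda\}$ is orthonormal, the adjointness $\langle s_\mu s_\nu,s_\lambda\rangle=\langle s_\nu,s_{\lambda/\mu}\rangle$ shows that $c_{\mu\nu}^\lambda$ is exactly the multiplicity of $s_\nu$ in the Schur expansion of $s_{\lambda/\mu}$. So it suffices to expand $s_{\lambda/\mu}$ in the Schur basis and to show that the coefficient of $s_\nu$ counts Yamanouchi skew tableaux of shape $\lambda/\mu$ and content $\nu$.

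For this expansion I would develop Schensted insertion and jeu de taquin. The key input is that every skew tableau has a well-defined \emph{rectification} --- a straight-shape semistandard tableau independent of the order in which inward slides are performed --- and that rectification preserves both Knuth equivalence and content. Two consequences follow: the number of skew tableaux of shape $\lambda/\mu$ rectifying to a given straight-shape target depends only on its shape, and each such skew tableau has the same content as its rectification. Summing the tableau generating function first over the fibre of a fixed target and then over all targets of shape $\nu$ therefore reassembles $s_\nu$, giving $s_{\lambda/\mu}=\sum_\nu c_{\mu\nu}^\lambda s_\nu$ with $c_{\mu\nu}^\lambda$ the number of skew tableaux rectifying to one fixed target of shape $\nu$. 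Taking that target to be the superstandard tableau (row $i$ filled entirely with $i$) pins down the fibre: a skew tableau rectifies to it exactly when its reading word is a lattice word --- the Yamanouchi condition --- and such a tableau automatically has content $\nu$. Hence $c_{\mu\nu}^\lambda$ counts the Yamanouchi skew tableaux of shape $\lambda/\mu$ and content $\nu$, as claimed.

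The main obstacle is the well-definedness of jeu de taquin: proving that the rectification of a skew tableau does not depend on the sequence of slides, equivalently the confluence of Knuth equivalence. This ``fundamental theorem'' is the combinatorial heart of the argument; once it is in hand, the identification of lattice words with rectifications to the superstandard tableau and the path-counting of the first step are comparatively routine.
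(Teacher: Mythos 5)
The paper offers no proof of this theorem: it is stated as a classical result and the reader is referred to \cite{Sagan} and \cite{MacDonald}, so there is nothing in the paper itself to compare your argument against. Your outline is, in substance, the standard jeu-de-taquin proof that appears in exactly those references (Jacobi--Trudi plus Lindstr\"om--Gessel--Viennot to get the tableau generating function, then rectification to expand $s_{\lambda/\mu}$ in the Schur basis), and as a proof plan it is structurally sound.

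Two places deserve more care than ``comparatively routine.'' First, the adjointness $\langle s_\mu s_\nu, s_\lambda\rangle=\langle s_\nu, s_{\lambda/\mu}\rangle$ is not automatic once you have \emph{defined} $s_{\lambda/\mu}$ combinatorially as a sum over skew tableaux: you must connect that combinatorial object to the Hall inner product, e.g.\ via the skew Jacobi--Trudi determinant $s_{\lambda/\mu}=\det(h_{\lambda_i-\mu_j-i+j})$ (which your LGV argument does give you) together with the duality of $\{h_\lambda\}$ and the monomial basis, or via the two-alphabet identity $s_\lambda(x,y)=\sum_\mu s_\mu(x)s_{\lambda/\mu}(y)$ and the Cauchy kernel. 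Second, the claim that the number of skew tableaux of shape $\lambda/\mu$ rectifying to a fixed target depends only on the target's \emph{shape} is strictly stronger than confluence of jeu de taquin; confluence makes rectification well defined, but the independence of the fibre size from the choice of target within a given shape requires an additional argument (tableau switching, dual equivalence, or the standardization trick). You correctly flag the First Fundamental Theorem as the heart of the matter, but this second input should be named as a separate lemma rather than folded into it. With those two points supplied, the identification of the fibre over the superstandard tableau with the Yamanouchi (lattice-word) tableaux is indeed standard, and the proof closes.
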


\noindent
The Littlewood-Richardson Rule gives an explicit combinatorial interpretation to the structure constants. And in particular, when $\nu=n$ is an integer, $s_{\nu}=h_n$ and we have the following Pieri Rule.

\begin{Theorem}
(Pieri Rule) For partition $\mu$ and natural number $n$, 
$$s_{\mu}h_{n}=\sum_{\nu\prec_n \lambda}s_{\lambda}.$$
\end{Theorem}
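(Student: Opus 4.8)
The plan is to obtain the Pieri rule as the special case $\nu=(n)$ of the Littlewood--Richardson rule (Theorem \ref{thm:2.1}), which we may assume. First I would record that $h_n=s_{(n)}$: taking $\lambda=(n)$ (so $k=1$) in the Jacobi--Trudi determinant of the preceding subsection collapses it to the $1\times 1$ matrix $[h_n]$, whence $s_{(n)}=h_n$. Consequently $s_\mu h_n=s_\mu s_{(n)}=\sum_\lambda c_{\mu,(n)}^{\lambda}\,s_\lambda$, and the whole task reduces to computing the coefficients $c_{\mu,(n)}^{\lambda}$.

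Next I would analyze the objects Theorem \ref{thm:2.1} counts, namely the skew semistandard Yamanouchi tableaux of shape $\lambda/\mu$ and content $(n)$. Content $(n)$ means the only entry used is $1$, occurring $n$ times. Strict increase down columns then forbids two $1$'s in a common column, so a filling exists exactly when $\lambda/\mu$ is a horizontal strip, i.e.\ has at most one cell in each column. The Yamanouchi condition is vacuous, since the letter $2$ never appears. When $\lambda/\mu$ is such a strip the filling is forced, so $c_{\mu,(n)}^{\lambda}\in\{0,1\}$, equal to $1$ precisely when $\lambda/\mu$ is a horizontal strip of size $n$.

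It then remains to show that ``$\lambda/\mu$ is a horizontal strip of size $n$'' is exactly the relation $\mu\prec_n\lambda$ of the definitions (reading condition (3) with $\lambda$ in the role of $\nu$). Conditions (1) and (2) are immediately the size and containment requirements $|\lambda|=|\mu|+n$ and $\mu\subseteq\lambda$. The substance is to match condition (3) with the no-two-cells-in-a-column requirement. I would recast the strip condition in interlacing form $\lambda_{i+1}\le\mu_i$ for all $i$, and observe that whenever row $i+1$ does not grow this holds automatically because $\mu$ is weakly decreasing; hence the only binding constraints arise between rows that both gain cells, which is precisely the hypothesis of condition (3). For two growing rows $i<j$ the new cells occupy the column ranges $(\mu_i,\lambda_i]$ and $(\mu_j,\lambda_j]$, and disjointness of these ranges is equivalent to $\lambda_j\le\mu_i$; this is the only surviving alternative of ``$\lambda_i\le\mu_j$ or $\lambda_j\le\mu_i$'' once the partition inequalities $\lambda_i\ge\lambda_j$ and $\mu_j\ge\mu_i$ are used, since $\lambda_i>\mu_i$ rules out $\lambda_i\le\mu_j$.

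I expect this last paragraph to be the only genuine obstacle. The stated condition (3) is symmetric and quantifies over all pairs of rows, whereas the horizontal-strip condition is usually phrased only for adjacent rows, so the bookkeeping is to verify both directions: that adjacent interlacing $\lambda_{i+1}\le\mu_i$ forces $\lambda_j\le\mu_i$ for every $j>i$ (using monotonicity of $\lambda$), and that the symmetric ``or'' in (3) collapses to the single relevant inequality. Everything preceding it is a direct specialization of the Littlewood--Richardson rule together with the identity $h_n=s_{(n)}$.
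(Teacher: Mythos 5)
Your proposal is correct and follows exactly the route the paper itself indicates: the paper offers no proof of its own, merely observing that the Pieri rule is the specialization of the Littlewood--Richardson rule (Theorem \ref{thm:2.1}) to $\nu=(n)$ via $h_n=s_{(n)}$ and deferring details to \cite{Sagan} and \cite{MacDonald}. You carry out that specialization in full, and in particular correctly verify the one nontrivial bookkeeping step, namely that ``$\lambda/\mu$ is a horizontal strip of size $n$'' coincides with the paper's relation $\mu\prec_n\lambda$.
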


\noindent
In the non-commutative case, we have the following two theorems that are analogous to what we had above.
\begin{Theorem}\label{thm:2.3}
(\cite{BBSSZ2} Theorem 3.5) For composition $\alpha$ and natural number $s$,
$$\fS_{\alpha}H_s=\sum_{\alpha\subset_s\beta}\fS_{\beta}.$$
\end{Theorem}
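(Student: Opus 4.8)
The plan is to prove the identity by expanding both sides in the $H$-basis and comparing the coefficient of each $H$-monomial. First I would normalize lengths. Writing $k=\ell(\alpha)$ and $n=k+1$, I observe that appending a zero part does not change an immaculate function: directly from \eqref{eq:1}, in the $S_{k+1}$-expansion of $\fS_{(\gamma_1,\dots,\gamma_k,0)}$ the last entry of the indexing vector is $\sigma_{k+1}-(k+1)\le 0$, so $H_{<0}=0$ kills every term except those fixing the final position, and one recovers $\fS_{(\gamma_1,\dots,\gamma_k)}$. Hence I may pad $\alpha$ to length $n$ by setting $\alpha_n=0$. Under this convention the map $\beta\mapsto m=\beta-\alpha$ is a bijection between compositions $\beta$ with $\alpha\subset_s\beta$ and vectors $m\in\ZZ_{\ge0}^{\,n}$ with $|m|=s$: condition (2) gives $m_j\ge 0$ for $j\le k$, condition (3) permits the single extra part $m_n\ge 0$, and (1) gives $|m|=s$ (note $\beta_j=\alpha_j+m_j\ge\alpha_j\ge 1$ keeps the first $k$ parts positive). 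Thus the goal reduces to the identity $\fS_\alpha H_s=\sum_{m\in\ZZ_{\ge0}^{\,n},\,|m|=s}\fS_{\alpha+m}$.

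Next I would expand everything into ordered $H$-words of length $n$. On the left, $\fS_\alpha H_s=\sum_{\tau\in S_k}(-1)^\tau H_{(\alpha_1+\tau_1-1,\dots,\alpha_k+\tau_k-k,\,s)}$, so for a fixed word $w=(w_1,\dots,w_n)$ the coefficient is $(-1)^\tau$ when $w_n=s$ and $(w_i-\alpha_i+i)_{i\le k}$ is a permutation $\tau\in S_k$, and $0$ otherwise. On the right, using multiplicativity of $H$ and \eqref{eq:1} for each $\fS_{\alpha+m}$, the coefficient of $H_w$ is the signed count $\sum_{\sigma}(-1)^\sigma$ over $\sigma\in S_n$ with $\sigma_i\le v_i$ for all $i$, where $v_i:=w_i-\alpha_i+i$ and the data $m_i=v_i-\sigma_i\ge 0$ is then forced. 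A degree count gives $|m|=\sum_i v_i-\binom{n+1}{2}=|w|-|\alpha|$, which automatically equals $s$ on every word that occurs. The heart of the matter is therefore to evaluate $\sum_{\sigma\in S_n,\ \sigma_i\le v_i}(-1)^\sigma$.

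I would read this as the determinant of the scalar $0/1$ matrix $B=[b_{ij}]$ with $b_{ij}=1$ if $j\le v_i$ and $b_{ij}=0$ otherwise, whose $i$-th row is an initial string of $\min(v_i,n)$ ones. Such a prefix matrix has determinant $0$ unless the row-lengths $\min(v_i,n)$ are a permutation of $(1,\dots,n)$, in which case $B$ is a row-permutation of the lower-triangular all-ones matrix and $\det B=\pm1$. Since $w_n\ge 0$ forces $v_n\ge n$, the $n$-th row is all ones; nondegeneracy then forces $(v_1,\dots,v_k)$ to be a permutation of $(1,\dots,k)$ with each $v_i\le k<n$, the degree identity forces $w_n=s$, and the surviving sign is exactly the $(-1)^\tau$ attached to the corresponding $\tau\in S_k$. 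This matches the left-hand coefficient word by word, which finishes the proof.

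The main obstacle I anticipate is concentrated in the last two steps. First, the reduction $H_0=1$ collapses distinct length-$n$ words to the same element of $\Nsym$; I avoid trouble by comparing the two sides as formal $\mathbb{Q}$-combinations of length-$n$ words and only afterward applying the convention map $H_w\mapsto\prod_i H_{w_i}$, which sends equal formal sums to equal elements, so the collapsing is harmless. Second, the genuine content is the cancellation in $\sum_{\sigma_i\le v_i}(-1)^\sigma$: one must show it vanishes off the ``Pieri words'' and is $\pm1$ with the correct sign on them. I expect the prefix-determinant evaluation above to be the cleanest route, but the same fact follows from an explicit sign-reversing involution: whenever two of the bounds $\min(v_i,n)$ coincide, transposing the two corresponding values of $\sigma$ pairs terms of opposite sign, so only the configuration with all bounds distinct, hence a permutation of $(1,\dots,n)$, survives and contributes a single term.
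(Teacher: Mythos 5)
The paper does not prove this theorem; it is imported from \cite{BBSSZ2} (Theorem 3.5) and used as a black box, so there is no internal proof to compare against. Your argument is a self-contained direct proof in the $H$-basis, and its individual steps are all correct: the padding lemma $\fS_{(\gamma_1,\dots,\gamma_k,0)}=\fS_{(\gamma_1,\dots,\gamma_k)}$, the bijection $\beta\mapsto m=\beta-\alpha$ onto $\{m\in\ZZ_{\ge0}^n:|m|=s\}$, the coefficient extraction $v_i=w_i-\alpha_i+i$, and the evaluation of $\sum_{\sigma_i\le v_i}(-1)^{\sigma}$ as the determinant of a prefix $0/1$ matrix (or by the transposition involution).

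The one genuine inaccuracy is the claim in your last paragraph that the two sides agree already as formal sums of length-$n$ words, so that the convention map is applied to equal formal objects. They do not agree. Your own determinant analysis shows that $\det B\neq0$ forces exactly one index $i_0$ with $v_{i_0}\ge n$, and the degree condition forces $v_{i_0}=n+s$; but nothing forces $i_0=n$. When $i_0=j\le k$, one has $v_n\le n-1$, hence $w_n=v_n-n<0$, and the right-hand formal sum carries a nonzero coefficient on such a word while the left-hand one carries zero. Concretely, for $\alpha=(1)$ and $s=1$ the left side is the single word $(1,1)$, while the right side is $([2,0]-[3,-1])+([1,1]-[2,0])=[1,1]-[3,-1]$. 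So the word-by-word match you establish holds only on words with $w_n\ge0$, which is exactly the hypothesis you quietly invoke mid-argument. The repair is one line: the difference of the two formal sums is supported on words containing a negative entry, and every such word is annihilated by the convention $H_{m}=0$ for $m<0$, so the images in $\Nsym$ coincide. Note that the hazard you do flag, the collapsing $H_0=1$, is indeed harmless for the reason you give; it is the $H_{<0}=0$ convention, not $H_0=1$, that accounts for the actual discrepancy between the formal sums, and your writeup should say so explicitly.
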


\begin{Theorem}\label{thm:2.4}
(\cite{BBSSZ} Theorem 7.3) For composition $\alpha$ and partition $\lambda$,
$$\fS_{\alpha}\fS_{\lambda}=\sum_{\gamma\models|\alpha|+|\lambda|}C_{\alpha,\lambda}^{\gamma}\fS_{\gamma}$$
where $C_{\alpha,\lambda}^{\gamma}$ is the number of skew immaculate Yamanouchi tableaux of shape $\gamma/\alpha$ and content $\lambda$.
\end{Theorem}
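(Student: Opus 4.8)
The plan is to reduce the statement to an iterated application of the right Pieri rule (Theorem~\ref{thm:2.3}), followed by a sign-reversing involution that collapses an alternating sum of ``immaculate Kostka numbers'' down to the Yamanouchi count. First I would use the defining expansion \eqref{eq:1} for the partition $\lambda = (\lambda_1, \dots, \lambda_m)$, namely $\fS_{\lambda} = \sum_{\sigma \in S_m} (-1)^{\sigma} H_{(\lambda_1 + \sigma_1 - 1, \dots, \lambda_m + \sigma_m - m)}$, and distribute the product on the right of $\fS_{\alpha}$:
\[ \fS_{\alpha}\fS_{\lambda} = \sum_{\sigma \in S_m} (-1)^{\sigma}\, \fS_{\alpha}\, H_{\lambda_1 + \sigma_1 - 1} H_{\lambda_2 + \sigma_2 - 2} \cdots H_{\lambda_m + \sigma_m - m}, \]
where terms with a negative subscript vanish and $H_0 = 1$. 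This turns the problem into understanding each product $\fS_{\alpha} H_{c_1} \cdots H_{c_m}$ with $c_i = \lambda_i + \sigma_i - i$.

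Next I would iterate Theorem~\ref{thm:2.3}. Applying it $m$ times expresses $\fS_{\alpha} H_{c_1} \cdots H_{c_m}$ as a sum of $\fS_{\gamma}$ over saturated chains $\alpha \subset_{c_1} \gamma^{(1)} \subset_{c_2} \cdots \subset_{c_m} \gamma^{(m)} = \gamma$. The key observation is that such chains are in bijection with skew \emph{immaculate} tableaux of shape $\gamma/\alpha$ and content $(c_1, \dots, c_m)$: label by $i$ the cells added at the $i$-th step. The containment clause $\alpha_j \leq \beta_j$ of $\subset_{c}$ forces the new cells of each step to be appended at the right of existing rows, which is exactly the weakly-increasing-row condition, while the clause $\ell(\beta) \leq \ell(\alpha)+1$ permits at most one new row per step, which corresponds precisely to at most one occurrence of each value in the first column --- that is, the strictly-increasing first-column condition of an immaculate tableau. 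Writing $\bar K_{\gamma/\alpha,\,\beta}$ for the number of skew immaculate tableaux of shape $\gamma/\alpha$ and content $\beta$, this yields
\[ \fS_{\alpha}\fS_{\lambda} = \sum_{\gamma} \Bigl( \sum_{\sigma \in S_m} (-1)^{\sigma}\, \bar K_{\gamma/\alpha,\ \lambda + \sigma - \id} \Bigr)\, \fS_{\gamma}, \]
the non-commutative analogue of the Jacobi--Trudi-to-Littlewood--Richardson passage used in Theorem~\ref{thm:2.1}.

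It then remains to show that the inner alternating sum equals the number of skew immaculate Yamanouchi tableaux of shape $\gamma/\alpha$ and content $\lambda$. The $\sigma = \id$ term contributes exactly the tableaux of content $\lambda$, and I would construct a sign-reversing involution on the remaining pairs $(\sigma, T)$ --- those with $\sigma \neq \id$ together with the non-Yamanouchi content-$\lambda$ tableaux --- that pairs them off with opposite signs, leaving only the Yamanouchi tableaux. The involution would be defined on the reading word by locating the first prefix where the lattice-word condition fails and performing the corresponding transposition of adjacent values, in the spirit of the classical Bender--Knuth / Lindström--Gessel--Viennot cancellation. \emph{The main obstacle} is verifying that this involution respects the immaculate constraint: the classical argument relies on full column-strictness, whereas here only the first column must be strictly increasing, so I must check that the swap never creates a repeated entry in the first column (equivalently, never produces an illegal chain under $\subset$) and that it genuinely reverses the sign $(-1)^{\sigma}$ while preserving the shape $\gamma/\alpha$. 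Establishing this compatibility, rather than the bookkeeping of the preceding two steps, is where the real work lies.
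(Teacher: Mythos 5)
Your first two steps are correct, and they coincide with the computation this paper itself performs in the proof of Theorem~\ref{thm:4.1}: expanding $\fS_{\lambda}$ via \eqref{eq:1}, iterating the right Pieri rule, and identifying saturated chains under $\subset_{c}$ with skew immaculate tableaux is exactly how equation~\eqref{eq:2} is derived, so the signed sum $\sum_{T\in\mathfrak{T}_{\alpha}^{\lambda}}(-1)^{\sigma(T)}\fS_{sh(T)}$ is established. Be aware, however, that the paper does not prove Theorem~\ref{thm:2.4} at all --- it quotes it from \cite{BBSSZ} --- and the entire content of that proof is the one step you defer.

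That deferred step is a genuine gap. You reduce the theorem to the existence of a sign-reversing involution on the pairs $(\sigma,T)$ with $\sigma\neq\id$ together with the non-Yamanouchi tableaux of content $\lambda$, and then state that checking compatibility with the immaculate condition is ``where the real work lies''; but that check \emph{is} the theorem, and the involution you gesture at (transpose adjacent values at the first failure of the lattice condition in the reading word) is not obviously workable. Any candidate swap must simultaneously preserve the outer shape $\gamma$, move the content from $\lambda+\sigma-\id$ to $\lambda+(t_r\circ\sigma)-\id$ so the sign genuinely reverses, keep rows weakly increasing, and --- the crux --- fix the first column so the image is still immaculate and still corresponds to a legal Pieri chain. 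The argument in \cite{BBSSZ}, which Section~\ref{sec:5} of this paper adapts, does not operate on the reading word: it passes to the auxiliary recording tableau $y(T)$, performs a Lindstr\"om--Gessel--Viennot row swap $\theta_x$ at a ``nefarious'' cell, and must select the \emph{most} nefarious cell (the leftmost one whose swap fixes the first column), then prove that such a cell exists whenever $T$ ought to cancel and that this choice is stable under the swap so the map squares to the identity. Without constructing some such involution and verifying these properties, the proposal is an accurate reduction plus a statement of the remaining difficulty, not a proof.
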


\noindent
Unfortunately, we do not have a nice formula for general structure constants because they could be negative, and no general cancellation-free formula is known for a composition $\beta$ instead of a partition $\lambda$.

\begin{Example}\label{ex:2.5}
\[ \fS_2\fS_{2,4}=\fS_{3,1,4}+\fS_{2,2,4}+\fS_{3,2,3}-\fS_{5,3}-\fS_{4,3,1}. \]
\end{Example}

\noindent
In \cite{BSZ}, the authors gave a formula for Left-Pieri Rule using the dual Hopf algebra, quasi-symmetric functions ($\Qsym$). In section 5, we will give another proof of that using a combinatorial approach.\\
\\

\section{The Saturation Conjecture}\label{sec:3}

\noindent
Let $T$ denote the set of triples $\{(\lambda,\mu,\nu)\mid c_{\mu\nu}^{\lambda}\neq0\}$. This is an important set that also appears in many other branches of mathematics such as representation of $GL_n(\mathbb{C})$ and Schubert calculus. There is an old conjecture, proved by A.Knutson and T.Tao \cite{KT}, asserts that $T_n$ is saturated in $\mathbb{Z}^{3n}$.\\
\\
For a composition $\alpha=(\alpha_1,\dots,\alpha_m)$ and a positive integer $N$, we denote their pointwise product $(N\alpha_1,\dots,N\alpha_m)$ by $N\alpha$.

\begin{Theorem}
 (Saturation Conjecture) For partitions $\mu,\nu$ and $\lambda$, and positive integer $N$, $c_{\mu\nu}^{\lambda}\neq0$ if and only if $c_{N\mu,N\nu}^{N\lambda}\neq0$.
\end{Theorem}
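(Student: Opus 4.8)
The plan is to treat the two implications separately, since the ``only if'' direction is the elementary semigroup property of Littlewood--Richardson coefficients while the ``if'' direction is the genuine saturation phenomenon established in \cite{KT}. The common tool is the hive model, which reinterprets the count of Theorem~\ref{thm:2.1}: $c_{\mu\nu}^{\lambda}$ equals the number of integer points of a rational polytope $H_{\lambda\mu\nu}$ (the hive polytope), cut out by finitely many rhombus inequalities with integer coefficients, whose fixed boundary entries are \emph{linear} in the parts of $\lambda,\mu,\nu$. Linearity of the boundary yields the scaling identity $H_{N\lambda,\,N\mu,\,N\nu}=N\cdot H_{\lambda\mu\nu}$. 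The forward direction is then immediate: any integer hive witnessing $c_{\mu\nu}^{\lambda}\neq 0$ scales, entrywise by $N$, to an integer hive witnessing $c_{N\mu,N\nu}^{N\lambda}\neq 0$.

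The reverse direction is the heart of the matter. From the scaling identity, $c_{N\mu,N\nu}^{N\lambda}\neq 0$ says that $N\cdot H_{\lambda\mu\nu}$ contains an integer point, and this forces $H_{\lambda\mu\nu}$ itself to be nonempty as a rational polytope. The theorem therefore reduces to the purely geometric assertion that \emph{a nonempty hive polytope with integral boundary contains an integer point}, since such a point certifies $c_{\mu\nu}^{\lambda}\neq 0$ directly.

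To produce that integer point I would exhibit a distinguished extremal hive, following \cite{KT}. The rhombus inequalities are a discrete concavity condition, and (in the standard normalization) this condition is preserved under the pointwise minimum of two hives, so whenever $H_{\lambda\mu\nu}$ is nonempty it has a unique entrywise-smallest element $h_{\min}$. At $h_{\min}$ no interior entry can be lowered without leaving the polytope, so some rhombus inequality through each interior vertex is tight; and a tight relation $a+c=b+d$ expresses one of its four entries as an integer combination of the other three. Starting from the integral boundary and moving inward, I would solve for the interior entries one at a time, concluding that $h_{\min}$ is integral. The main obstacle is exactly this propagation step: one must produce an ordering of the interior vertices in which each is determined by a tight rhombus whose remaining three entries are already known, and handle the degenerate configurations --- several coincident tight inequalities, or relations coupling two as-yet-undetermined vertices --- where no such naive ordering exists. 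Taming these degeneracies, through the rigidity theory of the honeycombs dual to hives, is the combinatorial core of \cite{KT} and is where essentially all of the difficulty resides; the two reductions preceding it are comparatively routine.
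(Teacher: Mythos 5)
First, note that the paper does not actually prove this statement: it is quoted as the theorem of Knutson and Tao \cite{KT}, so there is no internal proof to compare against. Your outline is therefore a summary of the known external proof, and it gets the architecture right: the ``only if'' direction is the elementary scaling property of the hive polytope, and the ``if'' direction reduces to showing that a nonempty hive polytope with integral boundary contains a lattice point. But you also (correctly) flag that the integrality of a suitable extremal hive is where all the work lies, and you do not carry that step out, so even on its own terms the proposal is a reduction to \cite{KT} rather than a proof.

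There is moreover a concrete error in the one substantive step you do assert. The rhombus inequalities are \emph{not} preserved under pointwise minimum: for a single rhombus constraint $b+c\ge a+d$, take $h_1=(a,b,c,d)=(0,0,0,0)$ and $h_2=(0,1,-1,0)$; both satisfy the inequality, but their entrywise minimum $(0,0,-1,0)$ gives $0+(-1)<0+0$. Hence the hive polytope has no entrywise-smallest element in general, and the distinguished hive used in the actual argument (e.g.\ in Buch's exposition of \cite{KT}) is instead one \emph{maximizing the linear functional given by the sum of all entries}; its integrality is established by a perturbation and rigidity argument on the graph of tight rhombi, not by the naive ``solve for one new entry from each tight rhombus'' propagation, which, as you yourself note, breaks down exactly at the degenerate configurations. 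So the proposal is a reasonable roadmap, but the lattice claim underpinning the existence of $h_{\min}$ is false, and the combinatorial core of the saturation theorem remains deferred to \cite{KT}.
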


\noindent
Naturally, a non-commutative analogue for saturation conjecture for compositions $\alpha,\beta$ and $\gamma$, and positive integer $N$ is: $C_{\alpha\beta}^{\gamma}\neq0$ if and only if $C_{N\alpha,N\beta}^{N\gamma}\neq0$. However, in this case, the conjecture fails and we provide the following counter example.\\
\\
Consider the case $\alpha=(1,1)$, $\beta=(3,2,2)$, $\gamma=(3,3,1,1,1)$, and $N=2$. Since $\beta$ is a partition, by Theorem \ref{thm:2.4}, $C_{\alpha\beta}^{\gamma}\neq0$ if and only if there exists a skew immaculate Yamanouchi tableau of shape $\gamma/\alpha$ and content $\beta$, and $C_{N\alpha,N\beta}^{N\gamma}\neq0$ if and only if there exists a skew immaculate Yamanouchi tableau of shape $N\gamma/N\alpha$ and content $N\beta$.\\
\\
Now, the tableau
\[ \tikztableausmall{{\boldentry X, \boldentry X, 1,1,1,1},{\boldentry X,\boldentry X,1,2,2,2},{1,3},{2,3},{3,3}} \]
is skew immaculate Yamanouchi, and hence $C_{N\alpha,N\beta}^{N\gamma}\neq0$. In fact, $C_{N\alpha,N\beta}^{N\gamma}=1$.\\
\\
On the other hand, for a tableau of shape $\gamma/\alpha$ and content $\beta$,
\[ \tikztableausmall{{\boldentry X, , },{\boldentry X, , },{ },{ },{ }} \to  \tikztableausmall{{\boldentry X, , },{\boldentry X, , },{1},{2},{3}} \to  \tikztableausmall{{\boldentry X,1,1},{\boldentry X, , },{1},{2},{3}} \to  \tikztableausmall{{\boldentry X,1,1},{\boldentry X,2,3},{1},{2},{3}} \]
cannot be immaculate Yamanouchi. To be immaculate, the first column must be strictly increasing i.e. $(1,2,3)$. To be Yamanouchi, the first row must consist of $1$ only i.e. $(1,1)$. Then, the second row has to be $(2,3)$, as it is immaculate. But the resulting tableau fails to be Yamanouchi, as $read(T)=(1,1,3,2,1,2,3)$ fails. Therefore, $C_{\alpha\beta}^{\gamma}=0$.\\
\\

\section{Translation Invariance}\label{sec:4}

\noindent
In this section, we give a useful fact that helps understand the structure constants for immaculate functions. In \cite{BBSSZ}, the authors proved this property when $\beta$ is a partition by using Theorem \ref{thm:2.4}. However, a general formula was not given there. We give here a simpler, fully general proof.

\begin{Theorem}\label{thm:4.1}
For compositions $\alpha,\beta,\gamma,v$ with $\ell(v)\leq\ell(\alpha)$, we have $C_{\alpha,\beta}^{\gamma}=C_{\alpha+v,\beta}^{\gamma+v}.$
\end{Theorem}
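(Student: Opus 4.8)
The plan is to prove the identity by induction using the non-commutative Bernstein (creation) operators $\mathbb{B}_m$ of \cite{BBSSZ}, which satisfy $\mathbb{B}_m(\fS_\delta)=\fS_{(m,\delta)}$ for every composition $\delta$ and integer $m$ (with the conventions of \eqref{eq:1}), and which can be written as $\mathbb{B}_m=\sum_{i\ge 0}(-1)^iH_{m+i}D_i$, where $D_0=\mathrm{id}$ and the skewing operators $D_i$ obey the co-Leibniz rule $D_i(xy)=\sum_{a+b=i}D_a(x)D_b(y)$ coming from the $\Nsym$–$\Qsym$ duality. I write $\alpha'=(\alpha_2,\dots,\alpha_k)$ for $\alpha$ with its first part removed, and similarly $v'=(v_2,\dots)$. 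The argument will handle an arbitrary composition $v$ directly, with no need to reduce to unit vectors.

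First I would establish a product recursion. Resumming $\mathbb{B}_{\alpha_1}(\fS_{\alpha'}\fS_\beta)$ via the co-Leibniz rule and isolating the term in which the right factor is left untouched (the $D_0$ piece) gives
\begin{equation*}
\fS_\alpha\fS_\beta=\mathbb{B}_{\alpha_1}\!\left(\fS_{\alpha'}\fS_\beta\right)-\sum_{b\ge 1}(-1)^b\,\fS_{(\alpha_1+b,\,\alpha')}\,D_b(\fS_\beta).
\end{equation*}
The point of this identity is that the $\alpha$-data now enters only through the shorter composition $\alpha'$ (first term), or through a length-$k$ composition paired with the lower-degree element $D_b(\fS_\beta)$ (second term). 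Writing $D_b(\fS_\beta)=\sum_\eta c^{(b)}_\eta\fS_\eta$ with $|\eta|=|\beta|-b$, every product occurring on the right-hand side has strictly smaller value of $N:=\ell(\alpha)+|\beta|$.

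Then I would induct on $N$, the base cases $\beta=\emptyset$ and $\alpha=\emptyset$ being immediate since there $C^\gamma_{\alpha,\beta}=\delta_{\alpha,\gamma}$ or $v=\emptyset$. For the inductive step I apply the recursion to both $\fS_\alpha\fS_\beta$ and $\fS_{\alpha+v}\fS_\beta$ and compare the coefficient of $\fS_\gamma$ in the former with that of $\fS_{\gamma+v}$ in the latter, term by term. In the first term the operator $\mathbb{B}_{\alpha_1+v_1}$ prepends the strictly positive part $\alpha_1+v_1$, so no straightening occurs and extracting $\fS_{\gamma+v}$ forces $\alpha_1=\gamma_1$ and selects exactly the coefficient $C^{\gamma'+v'}_{\alpha'+v',\beta}$, which equals $C^{\gamma'}_{\alpha',\beta}$ by the inductive hypothesis (applicable since $\ell(v')\le\ell(\alpha')$); this matches the coefficient of $\fS_\gamma$ produced by $\mathbb{B}_{\alpha_1}(\fS_{\alpha'}\fS_\beta)$. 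In the second term $(\alpha_1+b,\alpha')+v=(\alpha_1+v_1+b,\alpha'+v')$, so each summand is the $v$-translate of the corresponding summand for $\fS_\alpha\fS_\beta$, and the inductive hypothesis (now with $\ell(v)\le\ell((\alpha_1+b,\alpha'))=k$) gives $C^{\gamma+v}_{(\alpha_1+v_1+b,\alpha'+v'),\eta}=C^{\gamma}_{(\alpha_1+b,\alpha'),\eta}$. Summing the two contributions yields $C^{\gamma}_{\alpha,\beta}=C^{\gamma+v}_{\alpha+v,\beta}$.

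The main obstacle is establishing the product recursion itself: it rests on the fact that $\mathbb{B}_m$ is built from left multiplication by the $H_{m+i}$ together with skewing operators satisfying the co-Leibniz rule, followed by a careful resummation. Reassuringly, the resulting identity is insensitive to whether the co-Leibniz rule is recorded as $D_a(x)D_b(y)$ or $D_b(x)D_a(y)$, since in either case the left factor reassembles into $\mathbb{B}_{m+b}(x)$; this frees me from tracking the left/right skewing convention in $\Nsym$. The only remaining delicate point is pure bookkeeping: one must confirm that extracting $\fS_{\gamma+v}$ after applying a creation operator never triggers a straightening relation, which holds precisely because the prepended part is strictly positive, so every $\mathbb{B}_m(\fS_\epsilon)=\fS_{(m,\epsilon)}$ appearing is an honest basis element.
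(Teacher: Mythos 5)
Your argument is correct, but it follows a genuinely different route from the paper. The paper never touches the creation operators: it first derives the signed tableau expansion $\fS_{\alpha}\fS_{\beta}=\sum_{T\in\mathfrak{T}_{\alpha}^{\beta}}(-1)^{\sigma(T)}\fS_{sh(T)}$ by combining the determinantal definition \eqref{eq:1} with iterated use of the right Pieri rule, and then proves the translation invariance by an explicit sign- and shape-preserving bijection $\mathfrak{T}_{\alpha}^{\beta}\to\mathfrak{T}_{\alpha+v}^{\beta}$ that shifts the $i$-th row of each tableau to the right by $v_i$ steps; the hypothesis $\ell(v)\leq\ell(\alpha)$ enters exactly to guarantee the first column (hence immaculateness) is preserved. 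You instead run a strong induction on $\ell(\alpha)+|\beta|$ off the commutation identity $\mathbb{B}_m(xy)=\sum_{b\geq 0}(-1)^b\,\mathbb{B}_{m+b}(x)\,e_b^{\perp}(y)$, which is indeed the identity underlying the proof of the right Pieri rule in \cite{BBSSZ2}; your bookkeeping is sound (the hypothesis $\ell(v)\leq\ell(\alpha)$ is used precisely where needed, namely $\ell(v')\leq\ell(\alpha')$ in the first term and $\ell(v)\leq\ell((\alpha_1+b,\alpha'))$ in the second, and no straightening occurs since every prepended part is positive). What each approach buys: yours avoids tableaux entirely and is insensitive to the combinatorial details of immaculateness, at the cost of importing the $\Nsym$--$\Qsym$ duality and the Bernstein-operator machinery; the paper's proof is longer to set up but produces equation \eqref{eq:2} as a byproduct, which is the essential input for the left Pieri rule analysis in Section~\ref{sec:5}, so within this paper the tableau route does double duty.
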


\begin{proof}

Let $\beta=(\beta_1,\dots,\beta_m)$. Using the definition of $\fS_{\beta}$ (\ref{eq:1}), we have

\[ \fS_{\alpha}\fS_{\beta}=\sum_{\sigma\in S_m}(-1)^{\sigma}\fS_{\alpha}H_{\beta_1+\sigma_1-1,\beta_2+\sigma_2-2,\dots,\beta_m+\sigma_m-m}. \]

\noindent
An iterative use of the right Pieri rule (Theorem \ref{thm:2.3}) gives

\[ \fS_{\alpha}\HH_{\tau}=\sum_{sh(T)=\gamma/\alpha \atop c(T)=\tau}\fS_{\gamma} \]

\noindent
where $\tau$ is a composition and the sum is over all skew immaculate tableau $T$.\\
\\
Combining the two equations above yields

\[ \fS_{\alpha}\fS_{\beta}=\sum_{\sigma\in S_m}\sum_{sh(T)=\gamma/\alpha \atop c(T)=\beta+\sigma-Id}(-1)^{\sigma}\fS_{\gamma}. \]

\noindent
Let $\mathfrak{T}_{\alpha}^{\beta}$ be the set of skew immaculate tableaux of inner shape $\alpha$ for which $c(T)-\beta+Id$ is a permutation (in one-line notation), where $Id=(1,2,.\dots,m)$. In this case, entries in $T$ must be in $\{1,2,\dots,m\}$ and $c(T)$ means the content vector of length $m$ by padding $0$'s. Let $\sigma(T)=c(T)-\beta+Id$, we have $(16)$ in \cite{BBSSZ}:

\begin{equation}\label{eq:2}
\fS_{\alpha}\fS_{\beta}=\sum_{T\in\mathfrak{T}_{\alpha}^{\beta}}(-1)^{\sigma(T)}\fS_{sh(T)}
\end{equation}

\begin{Example}
Let $\alpha=(1)$, $\beta=(1,3,1)$, then
\[ T=\tikztableausmall{{\boldentry X,2},{1,2,2},{2}} \in\mathfrak{T}_{\alpha}^{\beta} \]
\end{Example}

\noindent
and $\sigma(T)=(1,4,0)-(1,3,1)+(1,2,3)=(1,3,2)$.\\
\\
For each $T\in\mathfrak{T}_{\alpha}^{\beta}$ with $sh(T)=\gamma/\alpha$, we move each of the first $\ell(v)$ rows of $T$ to the
right by a certain number of steps, namely, the $i$-th row by $v_i$ steps, where $v=(v_1,v_2,\dots,v_{\ell(v)})$. By this construction, we obtain a tableau $T'\in\mathfrak{T}_{\alpha+v}^{\beta}$ with $sh(T')=(\gamma+v)/\alpha$, and vice versa. Since $\ell(v)\leq\ell(\alpha)$, the first column is preserved under this map. Moreover, $c(T)=c(T')$ and hence $\sigma(T)=\sigma(T')$. Then, the result follows.
\end{proof}

\noindent
Therefore, in order to understand $C_{\alpha\beta}^{\gamma}$, it suffices to understand those when $\alpha$ is the $n$-tuple $(1,1,\dots,1)$ for $n\in\mathbb{N}$.\\
\\
\begin{Example}
Let $\alpha=(1,1,2)$, $\beta=(2,1,3)$, $v=(1,2)$, then
\[ \tikztableausmall{{\boldentry X,1},{\boldentry X,1,2,3},{\boldentry X,\boldentry X,3,3}} \in\mathfrak{T}_{\alpha}^{\beta}\Leftrightarrow \tikztableausmall{{\boldentry X,\boldentry X,1},{\boldentry X,\boldentry X,\boldentry X,1,2,3},{\boldentry X,\boldentry X,3,3}} \in\mathfrak{T}_{\alpha+v}^{\beta}. \]
\end{Example}

\section{Left Pieri Rule}\label{sec:5}

\noindent
Unlike the commutative case, for immaculate functions, the left Pieri rule is much different from the right Pieri rule. As shown in Example \ref{ex:2.5}, the structure constants could be negative.\\
\\
Theorem \ref{thm:4.1} tells that formulating the left Pieri rule is equivalent to understanding $H_1\fS_{\beta}=\fS_1\fS_{\beta}$. Equation (\ref{eq:2}) gives a combinatorial interpretation of the coefficients, but with a sign. In \cite{BBSSZ}, the authors proved Theorem \ref{thm:2.4} by using a sign-reversing involution. Inspired by that, we modify that involution and obtain a cancellation-free formula for the coefficients.\\
\\
Let $\mathfrak{T}_{\alpha}^{\beta}$ be as defined in section ~\ref{sec:4} with $\ell(\beta)=n$. First, we define an involution from $\mathfrak{T}_{\alpha}^{\beta}$ to itself.

\begin{Definition}
For each tableau $T\in\mathfrak{T}_{\alpha}^{\beta}$, we construct a tableau $y(T)$ as follows. For every cell of content $r$ in the $i$-th row of $T$, we put a cell of content $i$ in the $\sigma(T)(r)$-th row of $y(T)$. We sort the entries of the rows of $y(T)$ in non-decreasing order. In general, $y(T)$ is to be a straight-shape tableau, and might have empty rows.\\
\\
We define a function $Y$ that maps $T\in\mathfrak{T}_{\alpha}^{\beta}$ to the pair $(y(T),\sigma(T))$. Note that $Y$ is injective i.e. fixing $\alpha$, we can recover $T$ from $(y(T),\sigma(T))$. More precisely, for every cell of content $r$ in the $i$-th row of $T'$, we put a cell of content $\sigma(T)^{-1}(i)$ in the $r$-th row of $T$.\\
\\
We define $Y^{-1}$ to be the reversed construction from a pair $(T',\sigma)$ to $T$ where $T'$ is a tableau with entries $\{1,\dots,n\}$ and $\sigma$ is a permutation in $S_n$. Here, $Y^{-1}$ is not the inverse map of $Y$ because $Y^{-1}(T,\sigma)$ may not be immaculate i.e. the domain of $Y$ is not equal to the image of $Y^{-1}$.
\end{Definition}

\noindent
In this case, $Y^{-1}\circ Y$ is the identity map while $Y\circ Y^{-1}$ is not because $Y^{-1}$ has a much larger domain.

\begin{Definition}
We say a cell $x$ not in the first row  with value $a$ is \textit{nefarious} if the cell above $x$ is either empty or it contains $b$ with $b\geq a$ i.e.
$$ \tikztableausmall{{\boldentry X},{$a$}} \text{   or   } \tikztableausmall{{$b$},{$a$}} $$
\end{Definition}

\begin{Example}
Let $\alpha=(1,2)$ and $\beta=(2,2,2)$. Let
$$ T= \tikztableausmall{{\boldentry X, 1,1,2},{\boldentry X,\boldentry X,2},{2,3}} $$
Note that $\sigma(T)=c(T)-\beta+Id=(2,3,1)-(2,2,2)+(1,2,3)=(1,3,2)$, hence,
$$ y(T)= \tikztableausmall{{1,1},{3},{1,2,3}} $$
and the nefarious cells in $y(T)$ are the three cells in the third row.
\end{Example}

The next definition defines a key involution that is a modified version of the Lindstrom-Gessel-Viennot swap, which is usually illustrated on lattice path, but can be applied to tableaux equally. The equivalence can be found in chapter $4.5$ of \cite{Sagan}.

\begin{Definition}
For each $(y(T),\sigma(T))\in Y(\mathfrak{T}_{\alpha}^{\beta})$ that contains a nefarious cell $x$, we define a tableau $\theta_x(y(T))$ and a pair $\Theta_x(y(T),\sigma(T))$ as follows:\\
\\
Let the cell $x$ appear in the $(r+1)$-th row of $y(T)$.
\begin{enumerate}
\item If the cell $y$ above $x$ is not empty, then define $\theta_x(y(T))$ to be the tableau obtained from $y(T)$ by moving:
\begin{enumerate}
\item all the cells strictly to the right of $x$ into the row above $x$
\item all the cells weakly to the right of $y$ into row $r+1$.
\end{enumerate}
\item Otherwise, move all the cells strictly to the right of $x$ into row $r$.
\end{enumerate}

\begin{figure}[h!]
\begin{center}
\begin{tikzpicture}[scale=0.5, every node/.style={anchor=south}]
  \coordinate (o) at (0, 0);
  \node[anchor=south west] at ($(o) +(-6,0)$) {\small row $r+1$};
  \node[anchor=south west] at ($(o) +(-6,1)$) {\small row $r$};
  \draw[color=black] (o) rectangle +(-2, 1);
  \node at ($(o) +(-1,0)$) {$\cdots$};
  \draw[color=black] ($(o) +(0,1)$) rectangle +(-2, 1);
  \node at ($(o) +(-1,1)$) {$\cdots$};
  \draw[color=black] (o) rectangle +(1, 1);
  \node at ($(o) +(0.5,0)$) {$x$};
  \draw[color=black] ($(o) + (0, 1)$) rectangle +(1, 1);
  \node[color=black] at ($(o) +(0.5,1)$) {$y$};
  \draw[color=black] ($(o) + (1, 0)$) rectangle +(8, 1);
  \node[color=black] at ($(o) +(5,0)$) {$u$};
  \draw[color=black] ($(o) + (1, 1)$) rectangle +(5, 1);
  \node[color=black] at ($(o) +(3.5,1)$) {$v$};
  \node at ($(o) +(11,0.5)$) {${\buildrel \theta_x\over\longmapsto}$};
  \coordinate (o) at (15, 0);
  \draw[color=black] (o) rectangle +(-2, 1);
  \node at ($(o) +(-1,0)$) {$\cdots$};
  \draw[color=black] ($(o) +(0,1)$) rectangle +(-2, 1);
  \node at ($(o) +(-1,1)$) {$\cdots$};
  \draw[color=black] (o) rectangle +(1, 1);
  \node at ($(o) +(0.5,0)$) {$x$};
  \draw[color=black] ($(o) + (1, 0)$) rectangle +(1, 1);
  \node[color=black] at ($(o) +(1.5,0)$) {$y$};
  \draw[color=black] ($(o) + (0, 1)$) rectangle +(8, 1);
  \node[color=black] at ($(o) +(4,1)$) {$u$};
  \draw[color=black] ($(o) + (2, 0)$) rectangle +(5, 1);
  \node[color=black] at ($(o) +(4.5,0)$) {$v$};
\end{tikzpicture}

\medskip
\medskip
 
\begin{tikzpicture}[scale=0.5, every node/.style={anchor=south}]
  \coordinate (o) at (0, 0);
  \node[anchor=south west] at ($(o) +(-6,0)$) {\small row $r+1$};
  \node[anchor=south west] at ($(o) +(-6,1)$) {\small row $r$};
  \draw[color=black] (o) rectangle +(-2, 1);
  \node at ($(o) +(-1,0)$) {$\cdots$};
  \draw[color=black] ($(o) +(0,1)$) rectangle +(-2, 1);
  \node at ($(o) +(-1,1)$) {$\cdots$};
  \draw[color=black] (o) rectangle +(1, 1);
  \node at ($(o) +(0.5,0)$) {$x$};
  \draw[color=black] ($(o) + (1, 0)$) rectangle +(8, 1);
  \node[color=black] at ($(o) +(5,0)$) {$u$};
  \node at ($(o) +(11,0.5)$) {${\buildrel \theta_x\over\longmapsto}$};
  \coordinate (o) at (15, 0);
  \draw[color=black] (o) rectangle +(-2, 1);
  \node at ($(o) +(-1,0)$) {$\cdots$};
  \draw[color=black] ($(o) +(0,1)$) rectangle +(-2, 1);
  \node at ($(o) +(-1,1)$) {$\cdots$};
  \draw[color=black] (o) rectangle +(1, 1);
  \node at ($(o) +(0.5,0)$) {$x$};
  \draw[color=black] ($(o) + (0, 1)$) rectangle +(8, 1);
  \node[color=black] at ($(o) +(4,1)$) {$u$};
\end{tikzpicture}
\end{center}

\label{fig:Theta2}
\end{figure}

\noindent
Let $t_r=(1,2,
\dots,r-1,r+1,r,r+2,r+3,
\dots,n)$ be the transposition of $r$ and $r+1$. Then, $
\Theta_x$ maps the pair $(y(T),\sigma(T))$ to $(\theta_x(y(T)),t_r\circ\sigma(T))$.
\end{Definition}

\begin{Example}
Let $x$ be the second cell in row 2. Then, $\theta_x$ maps
$$ \tikztableausmall{{$1$,$2$,$3$},{$2$,$2$,$3$}} \to \tikztableausmall{{$1$,$3$},{$2$,$2$,$2$,$3$}}$$
or
$$ \tikztableausmall{{$1$},{$2$,$2$,$3$}} \to \tikztableausmall{{$1$,$3$},{$2$,$2$}}. $$
\end{Example}

\begin{Definition}
For $1\leq r\leq\ell(\alpha)$ and a cell $x$ in row $r$ of $y(T)$, we say that $x$ is \textit{the most nefarious cell in row} $r$ if it is the left-most nefarious cell in row $r$ such that $Y^{-1}\circ\Theta_x\circ Y$ fixes the first column of $T$. In particular, $Y^{-1}\circ\Theta_x\circ Y$ is immaculate.\\
\\
Then, we can define a map $\Phi_r:\mathfrak{T}_{\alpha}^{\beta}\to\mathfrak{T}_{\alpha}^{\beta}$ by either $\Phi_r(T)=Y^{-1}\circ\Theta_x\circ Y(T)$ where $x$ is the most nefarious cell in row $r$ of $y(T)$, or $T$ is fixed by $\Phi_r$ if there is no most nefarious cell in row $r$ of $y(T)$.
\end{Definition}

\noindent
For every $r$, $\Phi_r$ has the following properties.

\begin{Lemma}
For each $T\in\mathfrak{T}_{\alpha}^{\beta}$,
\begin{enumerate}
\item If there exists a most nefarious cell $x$ in row $r$, then it must be the left-most nefarious cell in row $r$.
\item $\Phi_r$ is an involution i.e. $\Phi_r^2=id$.
\item $T$ and $\Phi_r(T)$ have the same shape.
\item If $T$ is not fixed by $\Phi_r$, then $\sigma(T)$ and $\sigma(\Phi_r(T))$ have opposite sign.
\end{enumerate}
\end{Lemma}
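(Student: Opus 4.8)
The plan is to treat $\Phi_r$ as a sign-reversing, shape-preserving involution of Lindström--Gessel--Viennot type and to read off each of its properties through the dictionary that $Y$ establishes between a tableau $T$ and the pair $(y(T),\sigma(T))$. First I would record two bookkeeping identities obtained by tracking where $Y$ sends each cell. Since a content-$s$ cell in row $i$ of $T$ becomes a content-$i$ cell in row $\sigma(T)(s)$ of $y(T)$, the content of $y(T)$ records the outer shape of $T$, namely $c(y(T))_i$ equals the length of row $i$ of $T$, while the length of row $s$ of $y(T)$ equals $c(T)_{\sigma(T)^{-1}(s)} = s - \sigma(T)^{-1}(s) + \beta_{\sigma(T)^{-1}(s)}$, using $\sigma(T)=c(T)-\beta+Id$. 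These two identities convert every claim about $\Phi_r$ into a claim about how $\theta_x$ redistributes cells between the two adjacent rows it touches, which I will call rows $r$ and $r+1$ in the convention of $\Theta_x$.

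With the dictionary in hand, parts (3) and (4) are bookkeeping. For (3): $\theta_x$ only moves cells between rows $r$ and $r+1$ and re-sorts them, so it neither creates nor destroys cells, the total multiset of entries of $y(T)$ is unchanged, hence $c(y(T))$ is unchanged, and by the first identity the outer shape of $T$ is preserved, giving $sh(T)=sh(\Phi_r(T))$. For (4): I would check that $\theta_x$ sends the lengths of rows $r,r+1$ from $(a,b)$ to $(b-1,a+1)$ in both cases of the definition, and that, writing $p=\sigma(T)^{-1}(r)$ and $q=\sigma(T)^{-1}(r+1)$, the length identity forces exactly these lengths under the permutation $t_r\circ\sigma(T)$. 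This simultaneously certifies that $\Phi_r(T)\in\mathfrak{T}_{\alpha}^{\beta}$ with $\sigma(\Phi_r(T))=t_r\circ\sigma(T)$, and yields the sign change in (4), since $t_r$ is a transposition and therefore $\sigma(T)$ and $t_r\circ\sigma(T)$ have opposite sign.

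Part (1) is the crux. I would translate rows $r$ and $r+1$ of $y(T)$ into the two weakly increasing sequences $b_1\le b_2\le\cdots$ and $a_1\le a_2\le\cdots$ recording, respectively, the rows of $T$ in which the values $p$ and $q$ occur. A cell in column $c$ of row $r+1$ is nefarious exactly when $b_c\ge a_c$ (with $b_c=\infty$ when row $r$ is too short), so the nefarious cells are the ``crossings'' of these two sequences and the left-most nefarious cell is the first crossing, governed by the inequalities $b_{c'}<a_{c'}$ for $c'<c$. The swap $\theta_x$ is then the standard exchange of tails past the first crossing, and passing back through $Y^{-1}$ with $t_r\circ\sigma(T)$ reassigns the values $p$ and $q$ among the rows of $T$. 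I would show that exchanging tails at the first crossing preserves the column-$1$ entries of $T$ (the rows with $\alpha_j=0$), using precisely the inequalities $b_{c'}<a_{c'}$, so that $Y^{-1}\circ\Theta_x\circ Y$ fixes the first column and is immaculate; whereas an exchange at a later crossing may push a value across column $1$. Hence the first crossing always satisfies the defining condition of the most nefarious cell, so any most nefarious cell must coincide with the left-most nefarious cell. This first-column analysis is the main obstacle: it is exactly the point where the immaculate condition (only column $1$ strict) departs from the fully column-strict case underlying the usual determinant, and it has to be verified by hand rather than inherited from the symmetric theory.

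Finally, (2) follows by combining (1) with the self-inverse nature of the constituent maps. I would check directly that $\theta_x$ is its own inverse, the subtlety being that the case in which the cell above $x$ is empty is interchanged with the subcase $u=\varnothing$ of the non-empty case; and $t_r^2=\mathrm{id}$. Because $\theta_x$ alters only the columns weakly to the right of $x$ and the cell that comes to sit above $x$ has value at least that of $x$, the cell $x$ stays in place and remains the left-most nefarious cell of $y(\Phi_r(T))$; by part (1) it is again the most nefarious cell of $\Phi_r(T)$. Therefore applying $\Phi_r$ to $\Phi_r(T)$ swaps back at the same $x$ and composes $t_r$ with itself, giving $\Phi_r^2=\mathrm{id}$.
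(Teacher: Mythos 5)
Your treatment of parts (2)--(4) is sound and is essentially the paper's own bookkeeping: (3) because $\theta_x$ preserves the multiset of entries of $y(T)$ and $c(y(T))$ records the outer shape, and (4) because $\sigma(\Phi_r(T))$ is $\sigma(T)$ composed with a transposition. The gap is in part (1). You reduce it to the claim that the exchange at the \emph{first} crossing (the left-most nefarious cell) always preserves the first column of $T$, so that the left-most nefarious cell always qualifies as the most nefarious cell. That claim is false. Take $\alpha=(1)$, $\beta=(1,2)$ and let $T$ be the tableau of shape $(1,1,2)/(1)$ with a single $1$ in row $2$ and two $2$'s in row $3$; then $\sigma(T)=\id$, $y(T)$ has first row $[2]$ and second row $[3,3]$, and the only nefarious cell of row $2$ is the second one (the first is not nefarious since $2<3$). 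Exchanging there and composing with $t_1$ leaves $y(T)$ unchanged but relabels, producing the tableau with a $2$ in row $2$ and two $1$'s in row $3$, whose first column is $(2,1)$: the first column is not preserved and the result is not even immaculate. So row $2$ has a nefarious cell but no most nefarious cell, and $T$ is fixed by $\Phi_2$ --- as it must be, since $\fS_{(1)}\fS_{(1,2)}=\fS_{(1,1,2)}$ and this $T$ is the unique surviving fixed point of that shape. Your claim would force this $T$ to cancel against something, contradicting the expansion.

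Part (1) asserts only the weaker, correct statement: \emph{if} some nefarious cell in row $r$ satisfies the first-column condition, \emph{then} the left-most nefarious cell does. The paper proves this by monotonicity: the exchange at a nefarious cell in position $c'$ interchanges a sub-collection of the entries interchanged by the exchange at a nefarious cell in position $c>c'$ (fewer $\sigma^{-1}(r)$'s and $\sigma^{-1}(r-1)$'s are swapped), so if the larger exchange leaves every relevant row minimum --- hence the first column --- unchanged, so does the smaller one. Thus the set of nefarious cells whose exchange fixes the first column is closed under moving left within the nefarious cells, and its left-most element, when the set is nonempty, is the left-most nefarious cell. Your ``first crossing'' picture is the right framework, but the verification must compare two exchanges rather than certify one unconditionally. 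This repair also matters in your part (2): to conclude that $x$ is again the most nefarious cell of $\Phi_r(T)$ you should argue that $x$ is nefarious in $y(\Phi_r(T))$, remains left-most among nefarious cells whose exchange fixes the first column, and that its exchange restores $T$ (hence does fix the first column); citing part (1) gives only the reverse implication.
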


\begin{proof}
\noindent
\begin{figure}[h!]
\begin{center}
\begin{tikzpicture}[scale=0.5, every node/.style={anchor=south}]
  \coordinate (o) at (0, 0);
  \node[anchor=south west] at ($(o) +(-6.4,-0.2)$) {\small row $c(a_i)$};
  \node[anchor=south west] at ($(o) +(-6.4,0.8)$) {\small row $c(b_i)$};
  \node[anchor=south west] at ($(o) +(-6.4,-1.2)$) {\small row $c(x)$};
  \draw[color=black] ($(o) +(-3,-1)$) rectangle +(3.5, 1);
  \node at ($(o) +(-1,-1)$) {$\cdots$};
  \draw[color=black] ($(o) + (0.5, -1)$) rectangle +(5, 1);
  \node at ($(o) +(3,-1.25)$) {$\sigma^{-1}(r)$};
  \draw[color=black] ($(o) + (5.5, -1)$) rectangle +(2, 1);
  \node[color=black] at ($(o) +(6.5,-1)$) {$\cdots$};
  \draw[color=black] ($(o) +(-3,0)$) rectangle +(2, 1);
  \node at ($(o) +(-2,0)$) {$\cdots$};
  \draw[color=black] ($(o) +(-3,1)$) rectangle +(3, 1);
  \node at ($(o) +(-1.5,1)$) {$\cdots$};
  \draw[color=black] ($(o) + (-1, 0)$) rectangle +(5, 1);
  \node at ($(o) +(1.5,-0.25)$) {$\sigma^{-1}(r)$};
  \draw[color=black] ($(o) + (0, 1)$) rectangle +(5, 1);
  \node[color=black] at ($(o) +(2.5,0.75)$) {$\sigma^{-1}(r-1)$};
  \draw[color=black] ($(o) + (4, 0)$) rectangle +(2, 1);
  \node[color=black] at ($(o) +(5,0)$) {$\cdots$};
  \draw[color=black] ($(o) + (5, 1)$) rectangle +(2, 1);
  \node[color=black] at ($(o) +(6,1)$) {$\cdots$};
  \node at ($(o) +(9,0.5)$) {${\buildrel y\over\longmapsto}$};
  \coordinate (o) at (16, 0);
  \node[anchor=south west] at ($(o) +(-6,0)$) {\small row $r$};
  \node[anchor=south west] at ($(o) +(-6,1)$) {\small row $r-1$};
  \draw[color=black] ($(o) +(-2,0)$) rectangle +(1, 1);
  \node at ($(o) +(-1.4,-0.2)$) {$a_1$};
  \draw[color=black] ($(o) +(-2,1)$) rectangle +(1, 1);
  \node at ($(o) +(-1.4,0.8)$) {$b_1$};
  \draw[color=black] ($(o) +(-1,0)$) rectangle +(2, 1);
  \node at ($(o) +(0,0)$) {$\cdots$};
  \draw[color=black] ($(o) +(-1,1)$) rectangle +(2, 1);
  \node at ($(o) +(0,1)$) {$\cdots$};
  \draw[color=black] ($(o) +(1,0)$) rectangle +(1, 1);
  \node at ($(o) +(1.6,-0.2)$) {$a_k$};
  \draw[color=black] ($(o) +(1,1)$) rectangle +(1, 1);
  \node at ($(o) +(1.6,0.8)$) {$b_k$};
  \draw[color=black] ($(o) +(2,0)$) rectangle +(1, 1);
  \node at ($(o) +(2.5,0)$) {$x$};
  \draw[color=black] ($(o) + (2, 1)$) rectangle +(2, 1);
  \node[color=black] at ($(o) +(3,1)$) {$u$};
  \draw[color=black] ($(o) + (3, 0)$) rectangle +(4, 1);
  \node[color=black] at ($(o) +(5,0)$) {$v$};
\end{tikzpicture}

\medskip
\medskip

\begin{tikzpicture}[scale=0.5, every node/.style={anchor=south}]
  \coordinate (o) at (2, 0);
  \node at ($(o) +(-7,0.5)$) {${\buildrel \theta_x\over\longmapsto}$};
  \node[anchor=south west] at ($(o) +(-6,0)$) {\small row $r$};
  \node[anchor=south west] at ($(o) +(-6,1)$) {\small row $r-1$};
  \draw[color=black] ($(o) +(-2,0)$) rectangle +(1, 1);
  \node at ($(o) +(-1.4,-0.2)$) {$a_1$};
  \draw[color=black] ($(o) +(-2,1)$) rectangle +(1, 1);
  \node at ($(o) +(-1.4,0.8)$) {$b_1$};
  \draw[color=black] ($(o) +(-1,0)$) rectangle +(2, 1);
  \node at ($(o) +(0,0)$) {$\cdots$};
  \draw[color=black] ($(o) +(-1,1)$) rectangle +(2, 1);
  \node at ($(o) +(0,1)$) {$\cdots$};
  \draw[color=black] ($(o) +(1,0)$) rectangle +(1, 1);
  \node at ($(o) +(1.6,-0.2)$) {$a_k$};
  \draw[color=black] ($(o) +(1,1)$) rectangle +(1, 1);
  \node at ($(o) +(1.6,0.8)$) {$b_k$};
  \draw[color=black] ($(o) +(2,0)$) rectangle +(1, 1);
  \node at ($(o) +(2.5,0)$) {$x$};
  \draw[color=black] ($(o) + (2, 1)$) rectangle +(4, 1);
  \node[color=black] at ($(o) +(4,1)$) {$v$};
  \draw[color=black] ($(o) + (3, 0)$) rectangle +(2, 1);
  \node[color=black] at ($(o) +(4,0)$) {$u$};
  \node at ($(o) +(7,0.5)$) {${\buildrel y^{-1}\over\longmapsto}$};
  \coordinate (o) at (16, 0);
  \node[anchor=south west] at ($(o) +(-6.4,-0.2)$) {\small row $c(a_i)$};
  \node[anchor=south west] at ($(o) +(-6.4,0.8)$) {\small row $c(b_i)$};
  \node[anchor=south west] at ($(o) +(-6.4,-1.2)$) {\small row $c(x)$};
  \draw[color=black] ($(o) +(-3,-1)$) rectangle +(3.5, 1);
  \node at ($(o) +(-1,-1)$) {$\cdots$};
  \draw[color=black] ($(o) + (0.5, -1)$) rectangle +(5, 1);
  \node at ($(o) +(3,-1.25)$) {$\sigma^{-1}(r-1)$};
  \draw[color=black] ($(o) + (5.5, -1)$) rectangle +(2, 1);
  \node[color=black] at ($(o) +(6.5,-1)$) {$\cdots$};
  \draw[color=black] ($(o) +(-3,0)$) rectangle +(2, 1);
  \node at ($(o) +(-2,0)$) {$\cdots$};
  \draw[color=black] ($(o) +(-3,1)$) rectangle +(3, 1);
  \node at ($(o) +(-1.5,1)$) {$\cdots$};
  \draw[color=black] ($(o) + (-1, 0)$) rectangle +(5, 1);
  \node at ($(o) +(1.5,-0.25)$) {$\sigma^{-1}(r-1)$};
  \draw[color=black] ($(o) + (0, 1)$) rectangle +(5, 1);
  \node[color=black] at ($(o) +(2.5,0.75)$) {$\sigma^{-1}(r)$};
  \draw[color=black] ($(o) + (4, 0)$) rectangle +(2, 1);
  \node[color=black] at ($(o) +(5,0)$) {$\cdots$};
  \draw[color=black] ($(o) + (5, 1)$) rectangle +(2, 1);
  \node[color=black] at ($(o) +(6,1)$) {$\cdots$};
\end{tikzpicture}

\end{center}
\end{figure}
For simplicity, we denote $\sigma(T)$ by $\sigma$.

(1) Suppose $x$ is the most nefarious cell in row $r$ of $y(T)$, as shown in the figure above, with entry $c(x)$. Let $c(y)$ be the entry in cell $y$ in $y(T)$. To obtain $\Phi_r(T)$ from $T$, it suffices to do the following. For every cell $y$ in row $r$ of $y (T)$ which lies weakly to the left of $x$, we replace a $\sigma^{-1}(r)$ in row $c(y)$
of $T$ by a $\sigma^{-1}(r-1)$. For every cell $y$ in row $r-1$ of $y(T)$ which lies strictly to the left of $x$, we
replace a $\sigma^{-1}(r-1)$ in row $c(y)$ of $T$ by a $\sigma^{-1}(r)$. Since $Y^{-1}\circ\Theta_x\circ Y$ fixes the first column of $T$, and if a cell $a_i$ is nefarious, then $Y^{-1}\circ\Theta_{a_i}\circ Y$ also fixes the first column of $T$, because we interchange less cells. Therefore, if $x$ is the most nefarious cell, it must be the left-most nefarious cell.

(2) If there is no most nefarious cell in row $r$, then $\Phi_r(T)=T$ and $\Phi_r^2(T)=T$. Otherwise, since $\Phi_r$ preserves the first column of $T$, $Y^{-1}(\theta_x\circ y(T),t_r\circ\sigma(T))\in\mathfrak{T}_{\alpha}^{\beta}$ and hence, $Y^{-1}\circ Y=id$. By part (1), the most nefarious cell remains unchanged under $\theta_x$. Therefore, $\Phi_r^2(T)=T$ as $\Theta_x^2(T)=T$, and $\Phi_r$ is an involution.

(3) $sh(T)=c(y(T))$ and $\Theta_x$ preserves the content of $y(T)$.

(4) By the definition of $\Theta_x$, if $\Phi_r(T)\neq T$, then $\sigma(\Phi_r(T))=t_{r-1}\circ\sigma(T)$.
\end{proof}

\noindent
Before we continue, consider the case where $\alpha=0$. Since $\fS_0=1$, we know that $\fS_0\fS_{\beta}=\fS_{\beta}$. On the other hand, we can also express $\fS_0\fS_{\beta}$ using (\ref{eq:2}). Hence, there must exist a sign-reversing involution on $\mathfrak{T}_{0}^{\beta}$ that cancels everything except the tableau corresponding to $\fS_{\beta}$, namely the unique immaculate tableau of shape $\beta$ and content $\beta$. For simplicity, we call this involution $\Phi_0$.
\\
\\
Now, we characterize the tableaux that are fixed by all $\Phi_r$. For simplicity, for $\alpha=(1)$, a composition $\beta$ and $T\in\mathfrak{T}_{\alpha}^{\beta}$, we define $\delta(T)$ as $(s+1,\delta_1,\dots,\delta_n)$ where $n=\ell(\beta)$, $s$ is the number of non-empty cells in the first row of $T$, and $\delta_i$ is the length of row that starts with $i$, not including the first row as it starts with empty cell. Here, $\delta(T)$ is an integer vector, it may not be a composition as some $\delta_i$ could be $0$. Then, $sh(T)=\gamma/\alpha=comp(\delta(T))/\alpha$ where $comp(\delta(T))$ is the composition obtained from $\delta(T)$ by removing the zeroes.

\begin{Lemma}\label{lem:5.8}
Fix $\alpha=(1)$. Let $T\in\mathfrak{T}_{\alpha}^{\beta}$ with outer shape $\gamma$ be fixed by all $\Phi_r$ and $\delta(T)$ be defined as above, then
\begin{enumerate}
\item All entries in the first row of $T$ must be the same, say $k$, and $\sigma(k)=1$. In particular, all $1$'s in $y(T)$ appear in its first row.
\item  If $\beta_1>s$, then all entries in the first row of $T$ are 1, and $\delta_1\leq\beta_1$.
\item If $\beta_1<s$, then $\sigma(1)=2$ and $\delta_1>\beta_1$.
\item If $\beta_1=s$, then
\begin{enumerate}
\item $\sigma(1)=1$ and $\delta_i=\beta_i$ for all $i>1$, or
\item $\sigma(1)=2$ and $\delta_1>\beta_1$.
\end{enumerate}
\item If $\delta_1>\beta_1$, and if $k$ is an entry in the second row of $T$ and $k\neq1$, then $k$ appears in the first row of $T$.
\end{enumerate}
\end{Lemma}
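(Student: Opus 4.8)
The plan is to read off, from the proof of the preceding lemma, an explicit combinatorial description of each $\Phi_r$ and then prove the lemma by contrapositive. Concretely, $\Phi_r$ exchanges the two values $\sigma^{-1}(r-1)$ and $\sigma^{-1}(r)$ among prescribed rows of $T$, and it moves $T$ precisely when row $r$ of $y(T)$ contains a nefarious cell whose induced swap leaves the first column of $T$ unchanged; the governing cell is the leftmost such cell (the ``most nefarious'' one). Thus ``$T$ is fixed by every $\Phi_r$'' means exactly that, in each row $r$ of $y(T)$, no nefarious cell yields a first-column-preserving swap. For every assertion I would assume its structural conclusion fails and then exhibit one concrete nefarious cell whose swap fixes the first column, so that the corresponding $\Phi_r$ fires and $T$ is not fixed.

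I would establish item (1) first, as its argument is the template for the rest. Since the entries of $y(T)$ are positive row indices and each row of $y(T)$ is sorted, any row $j\ge 2$ of $y(T)$ that contains a $1$ has that $1$ in its first column, and such a cell is automatically nefarious (the cell above it is empty or has value $\ge 1$). Because the cell lies in column $1$, its $\theta_x$-swap moves only itself, and under $Y^{-1}$ this changes a single entry of row $1$ of $T$; as row $1$ lies entirely outside the first column, the swap preserves the first column, so $\Phi_j$ fires. Hence a fixed $T$ keeps all its $1$'s in row $1$ of $y(T)$, which is equivalent to row $1$ of $T$ being constant with value $k_0$ satisfying $\sigma(k_0)=1$.

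For items (2)--(4) I would record the identities $s=sh(T)_1$ (the number of $k_0$'s in row $1$) and $c(T)_i=\beta_i+\sigma(i)-i$, together with the observation that at most one first-column row can begin with $1$, necessarily row $2$. The decisive count is the number of $1$'s, $c(T)_1=\beta_1+\sigma(1)-1$, all of which outside row $1$ are forced into row $2$. A first application of the template shows $\sigma(1)\le 2$: if $\sigma(1)\ge 3$ then $k_0\ne 1$, every $1$ sits in row $2$, and the leftmost content-$2$ cell of row $\sigma(1)$ in $y(T)$ is nefarious while its swap merely recolors one of the (at least two) $1$'s of row $2$, hence preserves the leading $1$ of that row and fires. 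The same device rules out $\sigma(1)=2$ once $\beta_1>s$ (the content-$2$ cell sitting in column $s+1$ of row $2$ of $y(T)$ is nefarious and its swap keeps a $1$ at the head of row $2$), which forces $k_0=1$ and gives the first half of (2). Splitting on the three regimes $\beta_1>s$, $\beta_1<s$, $\beta_1=s$ then pins down $\sigma(1)$ in each, and the accompanying inequality between $\delta_1$ and $\beta_1$ follows by counting how many cells row $2$ can carry before a content-$2$ cell of $y(T)$ becomes a first-column-preserving nefarious cell.

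Item (5) treats the branch $\delta_1>\beta_1$, equivalently $\sigma(1)=2$, where $k_0\ne 1$ and row $2$ of $T$ opens with a block of $1$'s: a value $a$ in row $2$ with $a\ne 1$ and $a\ne k_0$ has $\sigma(a)\ge 3$, so it produces a leftmost content-$2$ cell in row $\sigma(a)\ge 3$ of $y(T)$ that is nefarious and whose swap recolors a non-leading entry of row $2$, preserving the first column and firing $\Phi_{\sigma(a)}$; hence every non-$1$ entry of row $2$ equals $k_0$. The step I expect to be the main obstacle is the bookkeeping imposed by the ``most nefarious cell'' clause: in every case one must check not merely that some nefarious cell exists but that the leftmost \emph{first-column-preserving} one is the cell actually governing $\Phi_r$, and must keep this preservation condition synchronized with the re-sorting of the rows of $y(T)$ under $\theta_x$. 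Identifying that cell precisely --- for instance the content-$2$ cell in column $s+1$ rather than those in columns $\le s$, whose upper neighbours are $1$'s and hence not nefarious --- is where the ``most nefarious'' refinement does the real work.
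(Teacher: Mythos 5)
Your overall strategy --- describe the explicit effect of each $\Phi_r$ on $T$ and prove each item by exhibiting, whenever its conclusion fails, a nefarious cell whose swap preserves the first column --- is exactly the paper's strategy, and your treatments of (1), (3), (5) and of the determination of $\sigma(1)$ in (2) and (4) coincide with the paper's (the paper phrases (1) via $r=\max\{\sigma(k_i)\}$ over the first-row entries rather than an arbitrary row of $y(T)$ containing a $1$, but the firing cell and the reason its swap only alters one entry of row $1$ of $T$ are the same). There is, however, one genuine gap: item (4a) asserts not only $\sigma(1)=1$ but also $\delta_i=\beta_i$ \emph{for all} $i>1$, which is a statement about every row of $T$, not about the interaction of rows $1$ and $2$. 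Your plan only promises ``the accompanying inequality between $\delta_1$ and $\beta_1$,'' so it never produces this conclusion. The paper obtains it by a separate device: when $\beta_1=s$ and $\sigma(1)=1$, all $1$'s sit in row $1$, the remainder of $T$ is (after relabelling) an element of $\mathfrak{T}_{0}^{(\beta_2,\dots,\beta_n)}$, and since $\fS_0\fS_{\beta'}=\fS_{\beta'}$ there is an auxiliary sign-reversing involution $\Phi_0$ whose unique fixed point is the tableau of shape and content $\beta'$; being fixed by it is what forces $\delta_i=\beta_i$ for $i>1$. None of your row-$r$ swaps substitutes for this, so you need to add that ingredient.

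A secondary soft spot is the second half of (2), namely $\delta_1\le\beta_1$. To locate the firing cell (the $(s+1)$-st cell of row $2$ of $y(T)$) one must first know that all non-$1$ entries of row $2$ of $T$ are equal to $\sigma^{-1}(2)$, so that row $2$ of $y(T)$ begins with exactly $\delta_1-\beta_1+s$ twos; the paper gets this by treating the $1$'s as empty cells and re-applying part (1) to the reduced tableau (alternatively, the ``maximal $\sigma(k)$'' argument you use for item (5) can be run here first to the same effect). Your phrase ``counting how many cells row 2 can carry'' presupposes this step without supplying it. Apart from these two points, and the harmless imprecision that $\theta_x$ ``moves only itself'' (it permutes the tails of two rows of $y(T)$; what is true is that only the cell $x$ changes its corresponding entry of $T$), the proposal is sound and follows the paper's own route.
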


\begin{proof}

(1) Let $k_1,\dots,k_m$ be the $m$ distinct entries in the first row of $T$. Let $r=\max\{\sigma(k_i)\}$ and suppose $r>1$. Then, the first cell in row $r$ of $y(T)$ is $1$, which must be the most nefarious cell: It is clearly the left-most nefarious cell, and (if we denote it by $x$) the map $Y^{-1}\circ \Theta_x \circ Y$ fixes the first column of $T$ (since it only changes a single entry in the first row
of $T$, but the first row of $T$ does not intersect the first column).\\
\\
Applying $\Phi_{r}$ gives an involution that cancels it, because in $\Phi_r(T)$, there is a $\sigma^{-1}(r-1)$ in row $1$, and that cell again corresponds to a most nefarious cell and $r$ is still the new $\max\{t_{r-1}\circ\sigma(k_i)\}$ for $\Phi_r(T)$. Therefore, $\Phi_r$ is indeed a involution, $T\neq\Phi_r(T)$, they have opposite sign and get canceled by $\Phi_r$. This contradict to our choice of $T$.\\
\\
(2) If $\beta_1>s$ and the entries in the first row of $T$ is $k\neq 1$, then all $1$'$s$ must be in the second row of $T$, because $T$ is immaculate. We claim that $\sigma(1)=2$. If not, $\sigma(1)>2$, and the first $2$ in row $\sigma(1)$ of $y(T)$ is the most nefarious cell. The involution $\Phi_{\sigma(1)}$ fixes the first column because there are at least two $1$'s in the second row of $T$, but only one of them is changed to $\sigma^{-1}(\sigma(1)-1)$.\\
\\
That means in $y(T)$, there are at least $\beta_1+1\geq s+2$ many $2$'s in row $2$. Since there are only $s$ $1$'s appearing in $y(T)$, the $s+1$ cell, counting from the left, in row $2$ must be the nefarious cell. Applying $\Phi_{2}$ gives an involution that maps it to the following situation, that is, we have $(t_1\circ\sigma)(1)=1$, all $s$ entries in the first row are changed $1$, and $s+1$ $1$'s in the second row are changed to $\sigma^{-1}(1)$. Therefore, the first column of $T$ remain unchanged.\\
\\
In this case, we have $\sigma(1)=1$, all entries in the first row are $1$ and the remaining $1$'s are in the second row. Now, we can consider the $1$'s as empty cells, and we are in a similar situation where $\alpha'=\beta_1-s$, $\beta'=(\beta_2,\dots,\beta_n)$ and $s'=\delta_1-\beta_1+s$. By part (1), if it is fixed by all $\Phi_r$, all entries in the second row must be the same $k$ and $\sigma(k)=2$. That means there are $s'$ $2$'s in the second row of $y(T)$. But there are $s$ $1$'s in the first row of $y(T)$. If $\delta_1>\beta_1$, then $s'>s$ and the $s+1$ cell in the second row of $y(T)$ becomes the most nefarious cell. The first column of $T$ is fixed under $\Phi_2$ because the smallest entry in the second row is always $1$. Therefore, the only tableaux that are fixed by $\Phi_2$ are those as defined in the statement.\\
\\
(3) If $\beta_1<s$, then it is not possible to fill the first row with $1$ while keeping $\sigma(1)=1$ since $c(T)=\sigma(T)+\beta-Id$. Therefore, all the $1$'s in $T$ must appear in the second row. Using the same argument as in the proof of part 2, we must have $\sigma(1)=2$  Moreover, since there are $\beta_1+1$ $1$'s in the second row of $T$, we must have $\delta_1>\beta_1$.\\
\\
(4) If $\beta_1=s$, then there are two cases. If $\sigma(1)=1$, then all $1$'s appear in the first row of $T$ and we are in the case that $\alpha'=0$ and $\beta'=(\beta_2,\dots,\beta_n)$. Applying $\Phi_0$ gives the desired result.\\
\\
If $\sigma(1)>1$, then we are in the same case as (3). Hence, $\sigma(1)=2$ and $\delta_1>\beta_1$.\\
\\
(5) If $\delta_1>\beta_1$, by (2),(3) and (4), we have $\sigma(1)=2$ and all $1$'s in $T$ appear in its second row. Therefore, if there is some $k$ in the second row of $T$ that $k\neq 1$ and $k$ does not appear in the first row of $T$, we must have $\sigma(k)>2$. In this case, let $k$ to be the one with maximal $\sigma(k)$ among all entries $j$ in the second row. It corresponds to a $2$ in row $\sigma(k)$ of $y(T)$. This must be the most nefarious cell because by (1), all $1$'s in $y(T)$ are in the first row of $y(T)$. $\Phi_{\sigma(k)}$ also fixes the first column of $T$ because the first entry in the second row of $T$ remains $1$.

\end{proof}

\noindent
We now use Lemma \ref{lem:5.8} iteratively.  After determining the first row, if we are in situation (2) of the Lemma we can consider all the $1$'s in $T$ as empty cells. If we are in situation (3), we can consider $1$'s as empty cells and then remove the first row, because the first row is completely determined by the entries in the second row.  By Theorem \ref{thm:4.1}, the number of empty cells in the first row does not matter. Hence, we are back in the same situation with a different $\alpha'$, $\beta'$ and $\gamma'$, where $\alpha'$ may not be $(1)$. Therefore, we have the following more general properties.

\begin{Corollary}\label{cor:5.9}
If $\alpha=(1)$, let $T\in\mathfrak{T}_{\alpha}^{\beta}$ with outer shape $\gamma$ be fixed by all $\Phi_r$, and $\delta$ be defined as above, then,
\begin{enumerate}
\item if $\beta_i<s+\displaystyle\sum_{j=1}^{i-1}(\delta_j-\beta_j)$, then $\beta_i<\delta_i$, $\sigma(T)(i)=i+1$ and all $i's$ in $T$ are in row $i+1$.
\item if $\beta_i>s+\displaystyle\sum_{j=1}^{i-1}(\delta_j-\beta_j)$, then $\beta_i\geq\delta_i\geq\displaystyle
\sum_{j=1}^{i}\beta_j-\sum_{j=1}^{i-1}\delta_j-s$, $\sigma(T)(i)=\max\left\{k+1\mid k<i,\beta_i>s+\displaystyle\sum_{j=1}^{i-1}(\delta_j-\beta_j)\text{ or }k=0\right\}$ and all cells above row $i+1$ in $T$ are filled with $\{1,2,\dots,i\}$.
\item if $\beta_i=s+\displaystyle\sum_{j=1}^{i-1}(\delta_j-\beta_j)$, then
\begin{enumerate}
\item $\beta_i<\delta_i$, $\sigma(T)(i)=i+1$ and all $i's$ in $T$ are in row $i+1$. (same as case 1), or
\item $\delta_i=0$, $\sigma(T)(i)$ is the same as in case 2 and for all $i<j\leq n$, $\beta_j=\delta_j$, $\sigma(j)=j$ and all $j's$ are in row $j+1$ of $T$.
\end{enumerate}
\end{enumerate}
Let $Z_{\beta}^{\gamma}$ be the set of all integer vectors $\delta(T)=(s+1,\delta_1,\dots,\delta_n)$ that satisfy these three conditions and $\mathrm{comp}(\delta)=\gamma$, then we have $C_{\alpha,\beta}^{\gamma}=\displaystyle\sum_{\delta\in Z_{\beta}^{\gamma}}\mathrm{sgn}(\beta-\delta)$, where $\mathrm{sgn}(\beta-\delta)=(-1)^k$ and $k$ is the number of negative terms in $\beta-\delta$.
\end{Corollary}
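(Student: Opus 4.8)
The plan is to show that equation (\ref{eq:2}) collapses, once all the sign-reversing involutions $\Phi_r$ cancel, to a signed sum over the tableaux fixed by every $\Phi_r$, and then to identify those surviving tableaux together with their signs as the set $Z_{\beta}^{\gamma}$. By (\ref{eq:2}) with $\alpha=(1)$ we have
\[ C_{(1),\beta}^{\gamma}=\sum_{T\in\mathfrak{T}_{(1)}^{\beta},\ sh(T)=\gamma/(1)}(-1)^{\sigma(T)}. \]
Each $\Phi_r$ is a shape-preserving, sign-reversing involution on $\mathfrak{T}_{(1)}^{\beta}$ by the Lemma (parts (3) and (4)), so I first organize the cancellation into a single involution: given $T$ not fixed by all $\Phi_r$, apply $\Phi_{r_0}$ for the smallest $r_0$ with $\Phi_{r_0}(T)\neq T$. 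Part (1) of the Lemma (the most nefarious cell is the left-most nefarious one) is exactly what guarantees this rule is well defined, namely that $\Phi_{r_0}$ disturbs no row below $r_0$ and keeps $r_0$ as the smallest moving index, so that it is an involution whose fixed set is precisely the set of $T$ fixed by every $\Phi_r$. Hence only these common fixed points contribute to $C_{(1),\beta}^{\gamma}$.

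Next I characterize the common fixed points by iterating Lemma \ref{lem:5.8}, inducting on $n=\ell(\beta)$. Lemma \ref{lem:5.8} determines the first row completely: it is constant with value $k$ and $\sigma(T)(k)=1$, and it splits the analysis into the three regimes governed by the sign of $\beta_1-s$. In each regime I perform the reduction described before the Corollary: when the value $1$ sits in the first row (i.e.\ $\sigma(T)(1)=1$) I reinterpret the $1$'s as empty cells, and when it sits in the second row (i.e.\ $\sigma(T)(1)=2$) I additionally delete the now fully determined first row. Theorem \ref{thm:4.1} guarantees that the number of empty cells prefixing each row is irrelevant to the count, so the reduced object is again of the form $\mathfrak{T}_{(1)}^{\beta'}$ with $\beta'=(\beta_2,\dots,\beta_n)$ and with $s$ replaced by $s+(\delta_1-\beta_1)$. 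Feeding this updated value into the induction produces exactly the running threshold $s+\sum_{j=1}^{i-1}(\delta_j-\beta_j)$ appearing in conditions (1)--(3), and matching the three regimes of Lemma \ref{lem:5.8} at stage $i$ against the three conditions yields the stated characterization. Running the reduction backwards, conditions (1)--(3) reconstruct a unique fixed $T$ from each admissible $\delta$ (here I use that $Y$ is injective, so $T$ is recovered from $(y(T),\sigma(T))$); since $\delta(T)$ records the row lengths, $\mathrm{comp}(\delta(T))=\gamma$, and $T\mapsto\delta(T)$ is a bijection from the common fixed points of outer shape $\gamma$ onto $Z_{\beta}^{\gamma}$.

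It remains to compute the sign $(-1)^{\sigma(T)}$ of a fixed tableau. From $c(T)=\sigma(T)+\beta-Id$ together with conditions (1)--(3), the one-line notation of $\sigma(T)$ is rigid: on each index $i$ with $\delta_i>\beta_i$ (the regimes where $\sigma(T)(i)=i+1$) the permutation shifts up by one, while each index $i$ with $\delta_i\le\beta_i$ resets $\sigma(T)(i)$ to the start of the block of $\delta_j>\beta_j$ indices immediately preceding it, by the $\max\{k+1\mid\dots\}$ formula. Consequently $\sigma(T)$ is a product of disjoint cyclic shifts, one per maximal run of indices with $\delta_i>\beta_i$; a cyclic shift on a run of length $\ell$ has sign $(-1)^{\ell-1}$ and contains exactly $\ell-1$ indices with $\delta_i>\beta_i$. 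Multiplying over the runs gives $(-1)^{\sigma(T)}=(-1)^{\#\{i:\delta_i>\beta_i\}}=\mathrm{sgn}(\beta-\delta(T))$. Substituting into the collapsed form of (\ref{eq:2}) and re-indexing the sum over $\delta\in Z_{\beta}^{\gamma}$ via the bijection above yields $C_{(1),\beta}^{\gamma}=\sum_{\delta\in Z_{\beta}^{\gamma}}\mathrm{sgn}(\beta-\delta)$.

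The main obstacle I expect is making the inductive reduction in the second paragraph genuinely rigorous: one must check that emptying, and possibly deleting, the top rows and reindexing through Theorem \ref{thm:4.1} returns a problem of the same type with precisely the updated parameters $\beta'$ and $s+(\delta_1-\beta_1)$, that the regime inequalities transform into the running thresholds without off-by-one errors, and that this is simultaneously compatible with the sign tracking. By comparison, once the rigid cyclic structure of $\sigma(T)$ is established the sign is a routine parity count, and the initial cancellation is a standard application of the involution principle given the Lemma.
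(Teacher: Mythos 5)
Your proposal is correct and follows essentially the same route as the paper: cancel via the sign-reversing involutions $\Phi_r$, characterize the surviving tableaux by iterating Lemma \ref{lem:5.8} with the ``treat the placed entries as empty cells and invoke Theorem \ref{thm:4.1}'' reduction so that the thresholds $s+\sum_{j=1}^{i-1}(\delta_j-\beta_j)$ emerge, and read off the sign from the correspondence between indices with $\delta_i>\beta_i$ and the values $\sigma(T)(i)=i+1$. Your explicit description of $\sigma(T)$ as a product of disjoint cyclic shifts is a slightly more detailed justification of the sign count than the paper gives, but it is the same argument.
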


\begin{proof}

(1) After filling $\{1,2,\dots,i-1\}$, if $\beta_i<s+\displaystyle\sum_{j=1}^{i-1}(\delta_j-\beta_j)$, that means even if all $i$'s appear strictly above row $r+1$, there are still some cells left unfilled. By a similar argument as in property $(1)$, if $k$ appears in the first $m$ rows of $T$, then $\sigma(k)\leq m$. An iterative use of property $(3)$ shows that either $\sigma(i)\leq i$, or $\sigma(T)=i+1$. Hence, if $\sigma(i)\leq i$, as we already have $\sigma(k)\leq i$ for all $k<i$, we cannot have any number larger than $i$ appear in the first $i$ rows of $T$. Therefore, all $i$'s must appear only in row $r+1$, $\sigma(i)=i+1$ and $\beta_i<\delta_i$.\\
\\
(2) After filling $\{1,2,\dots,i-1\}$, if $\beta_i>s+\displaystyle\sum_{j=1}^{i-1}(\delta_j-\beta_j)$. If $\sigma(i)=i+1$, by a similar argument to that in property $(2)$, we will get a most nefarious cell in row $i+1$ of $y(T)$, contrary to our choice of $T$. Hence, $\sigma(i)\leq i$. Since we already have $\sigma(k)\leq i$ for all $k<i$, $\sigma(i)$ is uniquely determined. To find $\sigma(i)$, we need to trace back and find the last time where $\sigma(k)\neq k+1$ among all $k<i$, or $1$ if $k$ does not exist. Therefore, $\sigma(T)(i)=\max\left\{k+1\mid k<i,\beta_i>s+\displaystyle\sum_{j=1}^{i-1}(\delta_j-\beta_j)\text{ or }k=0\right\}$. By a similar argument to that in property $(2)$, we must have $\beta_i\geq\delta_i$. And $\delta_i\geq\displaystyle
\sum_{j=1}^{i}\beta_j-\sum_{j=1}^{i-1}\delta_j-s$ because we need enough space to put $\{1,2,\dots,i\}$ into the first $i+1$ rows of $T$.\\
\\
(3) After filling $\{1,2,\dots,i-1\}$, if $\beta_i=s+\displaystyle\sum_{j=1}^{i-1}(\delta_j-\beta_j)$, then we have two cases. If $\sigma(i)=i+1$, then we are in the same situation as in (1). If $\sigma(i)\leq i$, then all $i$'s appear strictly above row $i+1$ and $\delta_i=0$. By a similar argument as in property (4), we have $\beta_j=\delta_j$ for all $i<j$.\\
\\
To sum up, each time we have a $\beta_i<\delta_i$, that corresponds to a $\sigma(i)=i+1$. Therefore, the sign of $\sigma(T)$ is sgn$(\beta-\delta)$.
\end{proof}

\noindent
Clearly, if $\ell(\gamma)<\ell(\beta)$ or $\ell(\gamma)>\ell(\beta)+1$, then $Z_{\beta}^{\gamma}=\emptyset$. If $\ell(\gamma)=\ell(\beta)+1$, then either $Z_{\beta}^{\gamma}=\emptyset$ or $Z_{\beta}^{\gamma}=\{\gamma\}$. If $\ell(\gamma)=\ell(\beta)$, it could happen that $\delta\neq\delta'$ but comp$(\delta)=$comp$(\delta')$.\\
\\
Suppose $\delta=(\delta_1,\dots,\delta_n)\in Z_{\beta}^{\gamma}$ and $\ell(\delta)=\ell(\beta)$. Let $1\leq k\leq n$ be the smallest integer such that $\beta_j=\delta_j$ for all $j>k$. Let $k\leq r\leq n$ be the largest integer such that $\beta_j<\beta_{j+1}$ for all $k\leq j<r$.\\
\\
Since $\delta\in Z_{\beta}^{\gamma}$, the composition $(\delta_1,\dots,\delta_k,0,\delta_{k+1},\dots,\delta_n)$ will satisfy the conditions in Corollary \ref{cor:5.9}. If $\beta_k<\beta_{k+1}=\delta_{k+1}$, by condition (3), we can always interchange $\sigma(k)$ to $k+1$ and obtain $(\delta_1,\dots,\delta_{k+1},0,\delta_{k+2},\dots,\delta_n)$ which also satisfies the conditions in Corollary \ref{cor:5.9}. However, for $j>r$, we cannot have the composition because condition (3.$a$) fails at row $r$. Since the compositions proceed in alternating signs, they cancel each other in pairs. Therefore, if $r-k$ is odd, then everything cancels and if $r-k$ is even, the first composition is left.\\
\\
Finally, we have the following criterion to determine the structure constants, and Lemma \ref{lem:5.8} and Corollary \ref{cor:5.9} give an algorithm to construct the corresponding tableau. At each step of filling numbers, if Corollary \ref{cor:5.9} fails, then Lemma \ref{lem:5.8} gives a corresponding $\Phi_r$ that cancels it, or an implicit involution using $\fS_0$. Therefore, combining with the argument above, we can assign each tableau in the above cases their corresponding involutions, and the remaining tableaux in $\mathfrak{T}_{\alpha}^{\beta}$ are left fixed. This indeed gives an involution because all the cases are disjoint and the involution sends the tableaux into tableaux in the same case, clear from the construction in Lemma \ref{lem:5.8}.\\
\\
For each outer shape $\gamma$, there can be at most one tableau fixed by this involution, which shows the left-Pieri rule is multiplicity free.

\begin{Theorem}
For $\alpha=(1)$ and a partition $\beta$ of length $n$, $\fS_{\alpha}\fS_{\beta}=\displaystyle\sum_{\gamma}C_{\alpha,\beta}^{\gamma}\fS_{\gamma}$ and
\begin{align*}
C_{\alpha,\beta}^{\gamma}=\left\{
\begin{array}{l l}
\mathrm{sgn}(\beta_1-\gamma_2,\dots,\beta_n-\gamma_{n+1})&\text{if }\ell(\gamma)=\ell(\beta)+1\text{ and }\gamma\in Z_{\beta}^{\gamma}\\
\\
\mathrm{sgn}(\beta_1-\gamma_2,\dots,\beta_{k-1}-\gamma_k)&\text{if }\ell(\gamma)=\ell(\beta),r-k\text{ is even and }\\&(\gamma_1,\dots,\gamma_k,0,\gamma_{k+1},\dots,\gamma_n)\in Z_{\beta}^{\gamma}\\
\\
0&otherwise
\end{array}\right.
\end{align*}
where $k$ and $r$ are as defined above.
\end{Theorem}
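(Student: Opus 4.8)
The plan is to assemble everything in this section into one global sign-reversing involution on $\mathfrak{T}_\alpha^\beta$ and then read off the surviving contributions shape by shape. I would begin from the signed expansion (\ref{eq:2}), namely $\fS_\alpha\fS_\beta=\sum_{T\in\mathfrak{T}_\alpha^\beta}(-1)^{\sigma(T)}\fS_{sh(T)}$, so that $C_{\alpha,\beta}^\gamma$ is exactly the signed count of $T$ with outer shape $\gamma$. By the Lemma, each $\Phi_r$ is shape-preserving and sign-reversing, and the same holds for the implicit $\fS_0$-reduction $\Phi_0$. First I would check that the family $\{\Phi_r\}_{r\geq 0}$, together with the iterative $\Phi_0$-reductions used in Corollary \ref{cor:5.9}, glues into a single well-defined involution: since every $\Phi_r$ acts within a fixed outer shape and the construction in Lemma \ref{lem:5.8} sends a non-fixed tableau to a canceling partner lying in the \emph{same} case, the cases are pairwise disjoint and the assignment is consistent. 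Its fixed points are precisely the tableaux pinned down by Corollary \ref{cor:5.9}.

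With the involution established, the signed count over each outer shape collapses onto the fixed points, which by Corollary \ref{cor:5.9} correspond bijectively to the integer vectors $\delta\in Z_\beta^\gamma$ with $\mathrm{comp}(\delta)=\gamma$, each contributing $\mathrm{sgn}(\beta-\delta)$. It then remains to evaluate $\sum_{\delta}\mathrm{sgn}(\beta-\delta)$ in closed form by splitting on $\ell(\gamma)$. If $\ell(\gamma)<\ell(\beta)$ or $\ell(\gamma)>\ell(\beta)+1$ then $Z_\beta^\gamma=\emptyset$ and $C_{\alpha,\beta}^\gamma=0$. If $\ell(\gamma)=\ell(\beta)+1$, then writing $\delta=(s+1,\delta_1,\dots,\delta_n)$ with $\gamma_1=s+1$, no coordinate vanishes, so $\delta=\gamma$ is forced and $\delta_i=\gamma_{i+1}$; the single surviving sign is $\mathrm{sgn}(\beta_1-\gamma_2,\dots,\beta_n-\gamma_{n+1})$, giving the first line of the formula.

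The case $\ell(\gamma)=\ell(\beta)$ is where the genuine work sits, and I expect it to be the main obstacle. Here exactly one coordinate $\delta_k$ is zero, so several distinct $\delta$ can share the same $\mathrm{comp}(\delta)=\gamma$ and $\sum_\delta\mathrm{sgn}(\beta-\delta)$ need not reduce to one term. I would run the pairing argument sketched after Corollary \ref{cor:5.9}: with $k$ the least index beyond which $\beta$ and $\delta$ agree and $r$ measuring the strictly increasing run of $\beta$ from $k$, condition (3) of Corollary \ref{cor:5.9} lets one slide the inserted zero through positions $k,\dots,r$, producing a chain of admissible vectors whose signs alternate. These cancel in pairs, so the sum vanishes when $r-k$ is odd and leaves only the vector $(\gamma_1,\dots,\gamma_k,0,\gamma_{k+1},\dots,\gamma_n)$ when $r-k$ is even. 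Finally I would compute that survivor's sign: case (3b) forces $\beta_j=\delta_j$ for $j>k$ while $\delta_k=0$ gives $\beta_k-\delta_k=\beta_k>0$, so the only possible negative entries of $\beta-\delta$ occur at indices $1,\dots,k-1$, yielding $\mathrm{sgn}(\beta_1-\gamma_2,\dots,\beta_{k-1}-\gamma_k)$ and the second line of the formula, with all other $\gamma$ contributing $0$.

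It is worth noting that when $\beta$ is a partition the run condition $\beta_j<\beta_{j+1}$ never holds, so $r=k$ and the parity hypothesis $r-k$ even is automatic; the delicate sliding cancellation is thus present in full generality in Corollary \ref{cor:5.9} but degenerates in the partition case, which is what ultimately forces the clean multiplicity-free shape of the answer.
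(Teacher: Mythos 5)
Your proposal is correct and follows essentially the same route as the paper: the theorem is obtained exactly by collapsing the signed expansion (\ref{eq:2}) onto the fixed points of the glued involution $\{\Phi_r\}$, identifying those fixed points with $Z_{\beta}^{\gamma}$ via Corollary \ref{cor:5.9}, and then performing the sliding-zero cancellation in the $\ell(\gamma)=\ell(\beta)$ case. Your closing observation that for a partition $\beta$ the condition $r=k$ holds automatically, so the parity hypothesis is vacuous, is a correct and slightly sharper reading of the statement than the paper makes explicit.
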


\begin{Example}
Let $\alpha=(1)$, $\beta=(3,1,4)$ and $\gamma=(2,3,2,2)$. As shown below, since $\gamma_1-1<\beta_1$ and $\gamma_2\leq\beta_1$, we have $\sigma(1)=1$ and all $1$'s appear in rows 1 and 2 of $T$. Then, since $\gamma_2-2=\beta_2$ and $\gamma_3>\beta_2$, we have $\sigma(2)=3$ and all $2$'s appear in row 3 of $T$. Finally, $\sigma(3)=2$. Therefore, $\sigma=(1,3,2)$ and $C_{\alpha,\beta}^{\gamma}=-1=\mathrm{sgn}(1,-1,2)$.

\[ \tikztableausmall{{\boldentry X, },{ , , },{ , },{ , }} \to \tikztableausmall{{\boldentry X, 1},{1,1, },{ , },{ , }} \to \tikztableausmall{{\boldentry X, 1},{1,1, },{2,2},{ , }} \to \tikztableausmall{{\boldentry X, 1},{1,1,3},{2,2},{3,3}} \]
\end{Example}

\noindent
This result is equivalent to the one in \cite{BSZ}, but here we give an explicit combinatorial interpretation and an algorithm for constructing the tableaux corresponding to the structure constants.

\newpage

\end{document}